\newcommand{\ga}{\alpha}
\newcommand{\gb}{\beta}
\newcommand{\gd}{\delta}
\newcommand{\gw}{\omega}
\newcommand{\gs}{\sigma}
\newcommand{\eps}{\varepsilon}
\newcommand{\cantor}{2^\gw}
\newcommand{\supp}{\mathrm{supp}}
\newcommand{\dom}{\mathrm{dom}}
\newcommand{\diam}{\mathrm{diam}}
\newcommand{\coll}{\mathrm{Coll}}
\newtheorem{theorem}{Theorem}[section]
\newtheorem{claim}[theorem]{Claim}
\newtheorem{corollary}[theorem]{Corollary}
\newtheorem{fact}[theorem]{Fact}
\newtheorem{proposition}[theorem]{Proposition}
\theoremstyle{definition}
\newtheorem{definition}[theorem]{Definition}
\newtheorem{example}[theorem]{Example}
\newtheorem{question}[theorem]{Question}
\title{Coloring equilateral triangles\footnote{2020 AMS subject classification 03E35, 14P99, 05C15.}}
\author{
Jind{\v r}ich Zapletal\\
University of Florida\\
zapletal@ufl.edu}
\begin{document}
\maketitle

\begin{abstract}
It is consistent relative to an inaccessible cardinal that ZF+DC holds, the hypergraph of equilateral triangles on a given Euclidean space has countable chromatic number, while the hypergraph of isosceles triangles on $\mathbb{R}^2$ does not.
\end{abstract}

\section{Introduction}

This paper deals with chromatic numbers of hypergraphs of arity three on Euclidean spaces. It has been known since the 1960's that the chromatic number of the hypergraph of equilateral triangles in $\mathbb{R}^2$ is countable in ZFC \cite{ceder:basis}. Erd\H os asked whether the same is true for the hypergraph of isosceles triangles in $\mathbb{R}^2$. On the way towards the solution of this problem, Schmerl first \cite{schmerl:triangle} gave a brief argument that the chromatic number of the hypergraph of equilateral triangles in any dimension is countable, later \cite{schmerl:partitions} gave an affirmative answer to the question of Erd\H os, and eventually classified all algebraic hypergraphs of countable chromatic number in a major breakthrough \cite{schmerl:avoidable}.

This paper is a commentary on this development of events. I will show that Schmerl's theorems appeared in order of increasing difficulty. Namely, I will prove

\begin{theorem}
\label{maintheorem}
If the theory ZFC+there is an inaccessible cardinal is consistent, then so is ZF+DC+for every number $d\geq 1$, the hypergraph of equilateral triangles in dimension $d$ has countable chromatic number+the hypergraph of isosceles triangles in $\mathbb{R}^2$ does not have countable chromatic number.
\end{theorem}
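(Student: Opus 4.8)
The plan is a symmetric‑extension argument in the style of the geometric set theory programme. Fix an inaccessible cardinal $\kappa$ and pass to the Solovay model $V(\R)$ obtained from $\coll(\gw,<\kappa)$; then force over it with a carefully chosen \emph{balanced} partial order $P$ designed so that its generic object supplies, for every $d\geq 2$, a countable coloring of the hypergraph of equilateral triangles on $\R^d$. Let $W$ be the resulting model. The standard $\mathrm{ZF}+\mathrm{DC}$‑preservation of this framework (the relevant posets being $\sigma$‑closed) yields $W\models\mathrm{ZF}+\mathrm{DC}$.

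The positive half of the theorem is then built into the construction, apart from the case $d=1$, which is trivial since a line carries no nondegenerate equilateral triangle. The real work on this side is exhibiting $P$ and checking that it is balanced; the guiding idea is that the equilateral hypergraph is \emph{algebraically confined}. Concretely, for $d=2$ identify $\R^2$ with $\mathbb{C}$ and set $\zeta=e^{i\pi/3}$, so that $\Q(\zeta)$ is a quadratic number field; every equilateral triangle $\{a,b,c\}$ admits a labelling with $c-a=\zeta^{\pm1}(b-a)$, hence is contained in the single affine line $a+\Q(\zeta)\cdot(b-a)$ over $\Q(\zeta)$, and this confining line depends on the triple in a Borel way. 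Given any $\Q(\zeta)$‑basis $B$ of $\mathbb{C}$ equipped with a linear order (for instance the lexicographic order of $\R^2$), the map sending $z\in\mathbb{C}$ to the finite tuple of its $\Q(\zeta)$‑coordinates relative to $B$, listed in increasing order of support, is a proper coloring of the equilateral hypergraph into the countable set $\Q(\zeta)^{<\gw}$; one checks this by a short induction that peels off the largest basis vector occurring. The role of $P$ is to adjoin such a basis (equivalently, the induced coloring) generically, and confinement is exactly what makes a single generic confining line the natural unit of information for a balanced virtual condition, so that any two such conditions amalgamate. For $d\geq 3$ the same scheme runs with Schmerl's higher‑dimensional variant in place of the planar $\Q(\zeta)$‑structure; alternatively one takes $P$ to be the (balance‑preserving) product over $d\geq 2$ of the corresponding posets.

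The negative half is the crux of the argument and the step I expect to be the main obstacle: one must show that no balanced extension of the Solovay model --- in particular not $W$ --- carries a countable coloring of the hypergraph of isosceles triangles on $\R^2$. The geometric input is that isosceles triangles are \emph{not} algebraically confined: given an apex $x_0$ and one further vertex $x_1$, the third vertex $x_2$ may range over the entire circle about $x_0$ through $x_1$, a genuine one‑parameter family over which one may pick a point generic over $V(\R)[x_0][x_1]$. So, supposing $\dot c$ is a name, below some condition, for a countable coloring of isosceles triangles, one uses balance to fix a balanced virtual condition $\bar b$ and a real parameter deciding the values of $\dot c$ on the relevant generic points, then chooses mutually generic $x_0$, $x_1$ and a generic $x_2$ on the circle, extends $\bar b$ three times to force $\dot c(x_0)=\dot c(x_1)=\dot c(x_2)$, and amalgamates the extensions by balance --- producing a monochromatic isosceles triangle and the sought contradiction. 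The delicate points, and where the genuine difficulty lies, are: (i) arranging $x_0$, $x_1$ and the radius so that the color $\dot c(x_0)$ is sufficiently abundant along the circle to accommodate two further monochromatic points, which requires a density and genericity analysis of $\dot c$ relative to $\bar b$; and (ii) verifying that the amalgamation genuinely succeeds for the isosceles configuration and, dually, isolating why the parallel attempt must fail for equilateral triangles --- namely that there the third vertex already lies in $V(\R)[x_0][x_1]$ and its color is pinned down by the generic confining line through $x_0$ and $x_1$. Carrying (i) and (ii) through completes the proof.
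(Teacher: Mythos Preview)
Your outline has the right global shape (Solovay model, balanced $\sigma$-closed poset, amalgamation over a generic triangle), but there are two genuine gaps that together are fatal.

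First, the amalgamation step cannot be carried out with ordinary balance. If $\langle x_0,x_1,x_2\rangle$ is a generic isosceles triangle, the models $V[x_0],V[x_1],V[x_2]$ are only \emph{pairwise} mutually generic; as a triple they satisfy a nontrivial algebraic relation, so the standard balance hypothesis (mutually generic triples) is simply not available. What one actually needs, and what the paper proves for its specific poset, is a strengthened amalgamation that works under the weaker hypothesis
\[
V[x_0,x_1]\cap V[x_1,x_2]\cap V[x_0,x_2]=V
\]
together with pairwise genericity. This strengthening is not automatic; it has to be engineered into the poset. The paper's conditions are partial colorings supported on countable real closed subfields, with a delicate ordering involving the quantity $\eps(A,x)$ measuring how close a would-be monochromatic edge can come to $x$ inside $D(A,x)$, plus a bookkeeping ideal $I$ on $\mathbb{Q}^+$. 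That machinery is exactly what makes the strengthened amalgamation go through for equilateral triangles: the only obstruction in the three-model case is a potential monochromatic hyperedge with one vertex in each $V[x_i]$, and for an \emph{equilateral} hyperedge the common side length lies in the triple intersection, hence in $V$, which feeds directly into the $\eps$-bound on the coloring. Your basis-adjunction poset is not shown to have any such property, and there is no reason to expect a Hamel-basis or $\mathbb{Q}(\zeta)$-basis forcing to admit amalgamation over merely pairwise-generic, pairwise-intersecting triples.

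Second, you have misidentified the geometric dichotomy. The point is not that the third equilateral vertex is determined by the first two (it is, but that is not what the argument uses). The relevant fact is the intersection computation above: it \emph{holds} for a Cohen-generic isosceles triangle (proved in the paper by a duplication argument over the variety of isosceles triangles), and it \emph{fails} for a generic equilateral triangle precisely because the side length $\delta$ lies in all three pairwise models $V[x_i,x_j]$ but not in $V$. That single real in the intersection is what blocks the parallel attack on equilateral triangles and, dually, what allows the poset's strengthened amalgamation to succeed. Your ``circle through $x_1$ about $x_0$'' picture and your point~(i) about abundance of colors along the circle are aiming at the wrong target; the argument neither chooses $x_2$ after $x_0,x_1$ nor analyzes color density, but rather forces all three vertices of a single Cohen-generic isosceles triple into a given nonmeager set and then invokes the strengthened amalgamation.
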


\noindent  Theorem~\ref{maintheorem} greatly understates the understanding of the resulting models of ZF+DC. They are obtained as balanced extensions of the choiceless Solovay model. Therefore, as a matter of general properties of such models, they do not contain e.g.\ maximal almost disjoint families \cite[Theorem 14.1.1]{z:geometric}. The posets generating them are $(4, 3)$-balanced (Corollary~\ref{balancecorollary}) and as a result, the models do not contain any discontinuous homomorphisms between Polish groups \cite[Theorem 13.2.1]{z:geometric}. In the model, every nonmeager (or non-null, in the sense of the usual two-dimensional Lebesgue measure) subset of $\mathbb{R}^2$ contains an isosceles triangle.  It is possible to achieve the independence result for all dimensions simultaneously. The posets are $d$-Noetherian balanced, which makes it possible to perform further dimension-specific analysis as in \cite{z:distance}. The inaccessible cardinal is needed only to initialize the methodology of geometric set theory independence proofs \cite[Part II]{z:geometric}; I doubt that it is necessary for the independence result.

There are many open questions in the area. The first one can be asked about any algebraic hypergraph without perfect cliques. A general negative answer is known only for arity two, i.e.\ algebraic graphs without perfect cliques \cite{z:ngraphs}. The case of equilateral triangles in dimension two is resolved in \cite{z:vitali}.

\begin{question}
Let $d\geq 3$ be a number. In ZF+DC, does countable chromatic number of hypergraph of equilateral triangles in $\mathbb{R}^d$ imply the existence of a Vitali set?
\end{question}

\noindent The second question deals with a specific type of coloring. Given a hypergraph $\Gamma$ on a topological space $X$, a $\Gamma$-\emph{free neighborhood assignment} is a map $c$ assigning to each point $x\in X$ an open neighborhood of $x$ in such a way that for every hyperedge $e\in\Gamma$ there is a vertex $x\in e$ such that $e\not\subseteq c(x)$.

\begin{question}
Let $d\geq 2$ be a number. Is there a balanced extension of the Solovay model in which there is an equilateral-triangle-free neighborhood assignment?
\end{question}

\noindent As for the architecture of the paper, Section~\ref{algebraicsection} collects the necessary concepts from real algebraic geometry, with the central Definition~\ref{centraldefinition}(2). Section~\ref{posetsection} constructs the coloring poset generating the model for Theorem~\ref{maintheorem}. Finally, Section~\ref{independencesection} shows how to prove that in the resulting model, there is no coloring of isosceles triangles in $\mathbb{R}^2$. The main point is very easy to describe: if $\{x_0, x_1, x_2\}$ is an isosceles triangle in $\mathbb{R}^2$ which is in a suitable sense generic, then $V[x_0, x_1]\cap V[x_1, x_2]\cap V[x_0, x_2]=V$. On the other hand, if this is an equilateral triangle in any dimension, then the intersection of the three models contains the length of the side of the triangle. The whole argument turns on this simple geometric point. The terminology of the paper sticks to the set theoretic standard of \cite{jech:newset}. For a bounded nonempty subset $O$ of a Euclidean space, I write $\diam(O)$ for the supremum of all Euclidean distances between the points in the set $O$. 

\section{Algebraic geometry}
\label{algebraicsection}

The proof uses several standard concepts from real algebraic geometry. To begin, recall \cite[Section 3.3]{marker:book} that the language of real closed fields includes symbols for addition, multiplication and inequality. The theory of real closed fields includes the axioms of an ordered field, the statement that squares are exactly the non-negative elements, and the statements saying that every nontrivial polynomial of odd degree has a root. The theory of real closed fields admits elimination of quantifiers \cite[Theorem 3.3.15]{marker:book}. For algebraic sets, I will need the following.

\begin{definition}
Let $n\geq 1$ be a number, and let $A\subseteq\mathbb{R}^d$ be a set.

\begin{enumerate}
\item The set $A$ is \emph{algebraic} if it is of the form $\{z\in\mathbb{R}^d\colon p(z)=0\}$ where $p$ is a polynomial with real coefficients;
\item the set $A$ is \emph{irreducible} if $A\subset\mathbb{R}^d$ is algebraic and whenever $A=B\cup C$ is a union of two algebraic sets, then either $B=A$ or $C=A$;
\item \cite[Definition 3.3.3]{bochnak:real} a point $x\in A$ is \emph{nonsingular} in $A$ if the dimension of the Zariski tangent space to $A$ at $x$ has the smallest possible value $\dim(A)$, where the dimension is as defined in \cite[Definition 2.8.1]{bochnak:real}.
\end{enumerate}
\end{definition}

\noindent The following fact records the basic results about algebraic sets and their nonsingular points.

\begin{fact}
\label{algebraicfact}
Let $n\geq 1$ be a number.

\begin{enumerate}
\item \textnormal{(Hilbert basis theorem)} There is no infinite sequence of algebraic sets which is strictly decreasing with respect to inclusion;
\item \textnormal{\cite[Theorem 2.8.3]{bochnak:real}} every algebraic set $A\subseteq\mathbb{R}^d$ has a unique decomposition into irreducible components: $A=\bigcup\{A_i\colon i\in j\}$ where $j\geq 1$ is a number and each sets $A_i$ is algebraic, irreducible, and not included in the union of the others; 
\item \textnormal{\cite[Claim 4.9]{z:distance}} if $A, B\subseteq\mathbb{R}^d$ are algebraic sets and $A$ is irreducible, then either $A\subseteq B$ or $A\cap B$ is nowhere dense in the set of nonsingular points of $A$.
\end{enumerate}
\end{fact}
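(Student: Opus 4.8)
The plan is to treat the three items in order of increasing substance. Item (1) follows from the ideal--variety correspondence together with the classical Hilbert basis theorem that $\mathbb{R}[x_1,\dots,x_d]$ is Noetherian: the operator $A\mapsto I(A)=\{p:p\restriction A\equiv 0\}$ is inclusion-reversing and satisfies $V(I(A))=A$ for every algebraic $A$, so a strictly $\subseteq$-decreasing sequence of algebraic sets would give a strictly increasing sequence of ideals, which is impossible. (Over $\mathbb{R}$ a finite intersection of polynomial zero sets is again a single polynomial zero set, via sums of squares, so $V$ of any finitely generated ideal is algebraic in the sense of the Definition.) For item (2), existence of a finite irreducible decomposition is Noetherian induction built on (1): a $\subseteq$-minimal algebraic set without such a decomposition would be reducible, splitting into two proper algebraic subsets each of which does decompose, a contradiction; redundant components are then discarded. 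Uniqueness is the standard argument that, given two irredundant decompositions $\bigcup_i A_i=\bigcup_k A'_k$, irreducibility of $A_i$ together with $A_i=\bigcup_k(A_i\cap A'_k)$ forces $A_i\subseteq A'_k$ for some $k$, and symmetrically $A'_k\subseteq A_m$ for some $m$, whence irredundancy gives $A_i=A'_k$ and the two lists coincide.

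The real content is in (3); a full proof appears as \cite[Claim 4.9]{z:distance}, and I would reconstruct it as follows. Assume $A\not\subseteq B$, so that $C:=A\cap B$ is a \emph{proper} algebraic subset of $A$. The first key step is the dimension drop for real algebraic sets: a proper algebraic subset of an irreducible algebraic set has strictly smaller dimension (in the sense of \cite[Definition 2.8.1]{bochnak:real}), so $\dim(C)<\dim(A)=:n$. Algebraically this reflects that $I(A)$ is prime (as $A$ is irreducible), so the coordinate ring $\mathbb{R}[x_1,\dots,x_d]/I(A)$ is a finitely generated domain, for which passing to a strictly larger (real radical) ideal strictly lowers the Krull dimension, since such a ring is equidimensional and catenary. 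The second key step is that the set $A_{\mathrm{ns}}$ of nonsingular points is, in the Euclidean topology, an analytic submanifold of $\mathbb{R}^d$ of pure dimension $n$ --- the implicit function theorem converts ``Zariski tangent space of dimension $n$'' into a local graph representation --- and it is open in $A$ because the singular locus is algebraic; hence $C\cap A_{\mathrm{ns}}$ is closed in $A_{\mathrm{ns}}$.

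It now suffices to check that $C\cap A_{\mathrm{ns}}$ has empty interior in $A_{\mathrm{ns}}$, for together with closedness this is precisely nowhere density. Were there a nonempty open $W\subseteq A_{\mathrm{ns}}$ with $W\subseteq C$, a chart would exhibit inside $C$ a subset semialgebraically homeomorphic to $(0,1)^n$, forcing $\dim(C)\geq n$ and contradicting the dimension drop. Equivalently, any polynomial defining $B$ would vanish on $W$, hence by the identity theorem for analytic functions on the connected component of $A_{\mathrm{ns}}$ meeting $W$ --- a component whose Zariski closure is an $n$-dimensional algebraic subset of the irreducible set $A$, hence all of $A$ --- so the polynomial would vanish on all of $A$, i.e.\ $A\subseteq B$, contrary to hypothesis. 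I expect the genuine obstacle to be the dimension theory over the reals used in the first key step: unlike the complex case, irreducibility, the Nullstellensatz, and the strict drop of dimension on proper subvarieties of an irreducible variety all require the real-algebraic machinery of \cite{bochnak:real}, and one must be careful that the various available notions of dimension (Krull dimension of the coordinate ring, transcendence degree of the function field, semialgebraic dimension) coincide for the algebraic sets in play.
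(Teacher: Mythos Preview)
The paper does not give its own proof of this statement: it is recorded as a \emph{Fact}, with item~(1) attributed to the Hilbert basis theorem, item~(2) to \cite[Theorem 2.8.3]{bochnak:real}, and item~(3) to \cite[Claim 4.9]{z:distance}. So there is nothing in the paper to compare against beyond the citations themselves.

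Your reconstruction is correct and follows the standard route one would find in those sources. The argument for (1) via the inclusion-reversing correspondence $A\mapsto I(A)$ and Noetherianity of $\mathbb{R}[x_1,\dots,x_d]$ is exactly right; the observation about sums of squares is a nice touch to match the paper's convention that an algebraic set is the zero set of a \emph{single} polynomial. The Noetherian-induction existence proof and the standard uniqueness argument for (2) are fine. For (3), your two-step outline---dimension drop for proper algebraic subsets of an irreducible set, combined with the fact that the nonsingular locus is a Euclidean manifold of dimension $\dim(A)$---is precisely the content of the cited claim, and your candid remark that the real-algebraic dimension theory (equality of Krull, semialgebraic, and transcendence-degree dimensions; primeness of $I(A)$ for irreducible $A$) is where the genuine work lies is accurate. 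The alternative route via the identity theorem is also valid, though note that to conclude the polynomial vanishes on all of $A$ you are implicitly using that any nonempty Euclidean-open subset of $A_{\mathrm{ns}}$ is Zariski-dense in $A$, which again rests on the dimension drop; so the two approaches you offer are really the same argument in different clothing.
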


\noindent Coming to the original contents of this paper, the following concept is key.

\begin{definition}
\label{centraldefinition}
Let $n\geq 1$ be a number, $x\in\mathbb{R}^d$, and $F\subseteq\mathbb{R}$. 

\begin{enumerate}
\item A set $A\subset\mathbb{R}^d$ is \emph{$F$-semialgebraic} if it is of the form $\{z\in\mathbb{R}^d\colon \mathbb{R}\models \psi(z)\}$ where $\psi$ is a a formula of the language of real closed fields with parameters in $F$;
\item $D(F, x)$ is the smallest set among the collection of $F$-semialgebraic sets containing $x$, which happen to be algebraic.
\end{enumerate}
\end{definition}

\begin{example}
Let $d=1$, $F=\mathbb{Q}$, and $x=\sqrt{2}$. Then $D(F, x)=\{\sqrt{2}\}$, even though the smallest zero set of a polynomial with rational coefficients containing $x$ is $\{-\sqrt{2}, \sqrt{2}\}$.
\end{example}

\noindent If the set $F\subset\mathbb{R}$ is a real closed subfield, then the set $D(F,x)$ is simply the smallest subset of $\mathbb{R}^d$
 which is a zero-set of a polynomial with coefficients in $F$ and contains $x$. This follows from $F$ being an elementary submodel of $\mathbb{R}$. However, we will use the sets $D(F, x)$ also when $F$ is a real closed subfield of $\mathbb{R}$ with finitely many additional elements, and that is where its exact definition will pay off. The following three propositions provide the most important information about the sets $D(F, x)$.

\begin{proposition}
\label{elementaryproposition}
Let $F\subset\mathbb{R}$ be a set and $x\in\mathbb{R}^d$ be a point. Then

\begin{enumerate}
\item $D(F, x)$ is an irreducible algebraic set;
\item $x$ is a nonsingular element of $D(F, x)$;
\item if $F\subset\mathbb{R}$ is a real closed subfield, then $D(F, x)=\{x\}$ if and only if $x\in F^d$.
\end{enumerate}
\end{proposition}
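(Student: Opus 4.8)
The plan is to establish the three parts in sequence, since each builds on the previous.

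\medskip

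For part (1), I would argue that $D(F,x)$ is well-defined and irreducible. The collection of $F$-semialgebraic sets containing $x$ which happen to be algebraic is nonempty: $\mathbb{R}^d$ itself is an $F$-semialgebraic algebraic set containing $x$. This collection is closed under finite intersections (an intersection of finitely many algebraic sets is algebraic, and an intersection of finitely many $F$-semialgebraic sets is $F$-semialgebraic). By Fact~\ref{algebraicfact}(1), the Hilbert basis theorem, there is no strictly decreasing infinite sequence of algebraic sets, so this collection has a minimal element under inclusion; by closure under intersection this minimal element is unique and is in fact the intersection of all members of the collection. This is $D(F,x)$, and it is algebraic by construction. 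For irreducibility: suppose $D(F,x)=B\cup C$ with $B,C$ algebraic. The point $x$ lies in one of them, say $x\in B$. Now I need $B$ to be $F$-semialgebraic — but a priori $B,C$ need not have parameters in $F$. The fix is to pass to irreducible components: by Fact~\ref{algebraicfact}(2), $D(F,x)$ decomposes as a finite union $\bigcup_i A_i$ of irreducible components, and this decomposition is unique, hence definable from $D(F,x)$; thus each $A_i$ is $F$-semialgebraic (the formula "$z$ lies in the $i$-th irreducible component of $D(F,x)$", suitably arithmetized via quantifier elimination, has parameters in $F$). The point $x$ lies in some component $A_i$, which is an $F$-semialgebraic algebraic set containing $x$, so by minimality $D(F,x)\subseteq A_i\subseteq D(F,x)$, forcing $D(F,x)=A_i$ to be irreducible.

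\medskip

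For part (2), I want $x$ to be nonsingular in $A:=D(F,x)$. The set $A_{\mathrm{sing}}$ of singular points of $A$ — points where the Zariski tangent space has dimension strictly greater than $\dim(A)$ — is itself an algebraic subset of $A$, defined by the vanishing of $A$'s defining polynomial together with the vanishing of the appropriate minors of its Jacobian; crucially it is a \emph{proper} algebraic subset of $A$ (a standard fact for irreducible algebraic sets, essentially \cite[Chapter 3]{bochnak:real}). Since $A$ is $F$-semialgebraic and the construction of $A_{\mathrm{sing}}$ from $A$ is definable over the reals with no new parameters (one uses $A$'s own defining data, which is captured by an $F$-formula through quantifier elimination — here is the subtle point, that "the defining polynomial of $D(F,x)$" is not literally available, but "$z$ is a singular point of $D(F,x)$" is an $F$-semialgebraic condition because $D(F,x)$ is $F$-semialgebraic and singularity is a first-order property of the ambient set), the set $A_{\mathrm{sing}}$ is $F$-semialgebraic and algebraic. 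If $x$ were in $A_{\mathrm{sing}}$, then $A_{\mathrm{sing}}$ would be an $F$-semialgebraic algebraic set containing $x$ properly contained in $A$, contradicting the minimality of $A=D(F,x)$. Hence $x\notin A_{\mathrm{sing}}$, i.e.\ $x$ is nonsingular in $D(F,x)$.

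\medskip

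For part (3), assume $F$ is a real closed subfield of $\mathbb{R}$. If $x\in F^d$, then the singleton $\{x\}$ is $F$-semialgebraic (cut out by $z_1=x_1\wedge\cdots\wedge z_d=x_d$ with parameters $x_i\in F$) and algebraic, so $D(F,x)\subseteq\{x\}$, giving $D(F,x)=\{x\}$. Conversely, suppose $D(F,x)=\{x\}$. Since $F$ is real closed, it is an elementary submodel of $\mathbb{R}$ in the language of ordered rings. The formula "there exists a unique $z$ with $\psi(z)$", where $\psi$ defines the $F$-semialgebraic set $\{x\}$, holds in $\mathbb{R}$ with parameters in $F$, hence holds in $F$ by elementarity; so there is $y\in F^d$ with $\mathbb{R}\models\psi(y)$, and by uniqueness $y=x$, so $x\in F^d$. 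I expect part (3) to be the easiest and part (1)'s irreducibility claim together with the parameter-tracking in part (2) to be the main obstacle — the recurring delicate issue is justifying that auxiliary algebraic sets built functorially from $D(F,x)$ (its irreducible components, its singular locus) remain $F$-semialgebraic; the clean way to see this is that $D(F,x)$ being $F$-semialgebraic makes its characteristic predicate an $F$-formula, and singularity and component-membership are then first-order definable relative to that predicate by quantifier elimination for real closed fields.
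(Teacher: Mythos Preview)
Your proposal is correct and follows essentially the same route as the paper: for (1) you use that the irreducible components of $D(F,x)$ are definable from $D(F,x)$, so the component containing $x$ is itself $F$-semialgebraic and algebraic, forcing $D(F,x)$ to equal that component; for (2) you use that the singular locus is a proper algebraic subset definable from $D(F,x)$, so minimality again excludes $x$ from it; for (3) you invoke elementarity of real closed subfields. The paper does exactly this, the only difference being that it spells out the definability claims more concretely---for (1) it writes down an explicit formula for a fixed component by quantifying over tuples of polynomials whose coefficient vectors lie in fixed rational basic open neighborhoods separating the components (using uniqueness of the decomposition), and for (2) it spells out the tangent-space definition---whereas you appeal more schematically to ``uniqueness implies definability'' and ``singularity is first-order relative to the $F$-predicate for $D(F,x)$.'' That schematic step is the one place a referee might push back, since uniqueness alone does not literally give a first-order formula; but the paper's explicit construction is precisely one way to cash it out, so there is no genuine gap.
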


\begin{proof}
(1) follows from the fact that the components of an algebraic set are definable from the algebraic set. To fill in the details, let $A\subset\mathbb{R}^d$ be an algebraic set. Let $p_i$ for $i\in j$ be polynomials with real coefficients whose zero sets are the components of the set $A$. Using dummy zero coefficients if necessary, we may assume that there is a number $m$ such that the sequence of coefficients of $p_i$ is an element of $\mathbb{R}^m$ for every $i\in j$. For each $i\in j$ let $O_i\subset\mathbb{R}^m$ be a basic open set containing the tuple of coefficients of $p_i$, so that the sets $O_i$ for $i\in j$ are pairwise disjoint. Now, the zero set of (say) the polynomial $p_0$ is defined as the set of all $x\in A$ such that there is a sequence $\langle q_i\colon i\in j\rangle$ of polynomials whose coefficients come from the set $O_i$ respectively, each of whose zero sets is a proper subset of $A$, and such that $q_0(x)=0$. This follows from the uniqueness of components of algebraic sets.

(2) follows from the fact that the set of singular points of an irreducible algebraic set $A$ is an algebraic, proper subset of it definable from $A$. The algebraicity and proper inclusion is proved in \cite[Proposition 3.3.14]{bochnak:real}. For the definability, it is enough to restate the definitions. Namely, for every polynomial $p$ such that $A=\{y\in\mathbb{R}^d\colon p(y)=0\}$, the Zariski tangent space to $A$ at $y$ is the set of all vectors perpendicular to the vector of partial derivatives of $p$ at $y$. The tangent space does not depend on the choice of $p$. The point $y$ is then nonsingular in $A$ if this space has maximal dimension possible, namely $\dim(A)$. This can be mechanically restated by a rather long formula in the language of real closed fields. 

(3) follows from the fact that a real closed subfield is an elementary submodel of $\mathbb{R}$. If $D(F, x)=\{x\}$ then each coordinate of $x$ is definable from parameters in $F$ and therefore an element of $F$.
\end{proof}

\begin{proposition}
\label{reflectionproposition}
Let $F_0, F_1$ be real closed subfields of $\mathbb{R}$ and let $n\in\gw$. For every finite set $a\subset\mathbb{R}$ there is a finite set $b\subset F_1$ such that for every $x\in F_1^n$, $D(F_0\cup b, x)\subseteq D(F_0\cup a, x)$.
\end{proposition}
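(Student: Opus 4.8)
The plan is to split the finite set $a$ according to how transcendental it is over the compositum $F_0F_1$ (the subfield of $\R$ generated by $F_0\cup F_1$) and to handle the two parts by unrelated mechanisms. I will use two elementary observations. First, when $E\subset\R$ is a real closed subfield, $D(E,x)$ is the smallest zero set $Z(P)=\{z\colon P(z)=0\}$ of a polynomial $P$ with coefficients in $E$ that contains $x$ (recorded in the excerpt). Second, if $E$ is real closed and $c\subset\R$ is finite, then every element of $\mathrm{rc}(E(c))$ is algebraic over $E(c)$ and hence definable from $E\cup c$, so the $(E\cup c)$-semialgebraic sets coincide with the $\mathrm{rc}(E\cup c)$-semialgebraic ones, and therefore $D(E\cup c,x)=D(\mathrm{rc}(E\cup c),x)$. (When $n=0$ the statement is trivial, so assume $n\geq 1$.)

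The crux is the case of one transcendental parameter: \emph{if $F\subset\R$ is a real closed subfield and $a_0\in\R$ is transcendental over $FF_1$, then $D(F,x)=D(F\cup\{a_0\},x)$ for every $x\in F_1^n$}. Only the inclusion $\supseteq$ needs an argument. Write $L=\mathrm{rc}(F(a_0))$, fix $P=\sum_\alpha c_\alpha z^\alpha\in L[z]$ with $Z(P)=D(F\cup\{a_0\},x)=D(L,x)$, and set $E=F\cap F_1$. Since $x\in F_1^n$, every monomial $x^\alpha$ lies in $F_1$, so $0=P(x)=\sum_\alpha c_\alpha x^\alpha$ is a linear dependence over $F_1$ among the finitely many $c_\alpha\in L$. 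Pick a basis $e_1,\dots,e_s$ of the $E$-span of $\{c_\alpha\}$ inside $L$ and write $c_\alpha=\sum_i d_{\alpha i}e_i$ with $d_{\alpha i}\in E\subseteq F$. The field $E$ is relatively algebraically closed in $\R$, hence in $L$; so $L$ is a regular extension of $E$ and is linearly disjoint from $F_1$ over $E$, and $e_1,\dots,e_s$ remain linearly independent over $F_1$. Reading off coefficients in $0=\sum_i\bigl(\sum_\alpha d_{\alpha i}x^\alpha\bigr)e_i$ gives $\sum_\alpha d_{\alpha i}x^\alpha=0$ for each $i$. Put $Q_i(z)=\sum_\alpha d_{\alpha i}z^\alpha\in F[z]$ and $Q=\sum_i Q_i^2\in F[z]$: then $x\in Z(Q)$, and any $z\in Z(Q)$ satisfies $P(z)=\sum_i e_i Q_i(z)=0$, so $Z(Q)\subseteq Z(P)$. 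Hence $D(F,x)\subseteq Z(Q)\subseteq Z(P)=D(F\cup\{a_0\},x)$.

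The remainder is bookkeeping with real closures of finitely generated extensions. By induction on $j$, using the crux together with the regrouping identity $D(F\cup c,x)=D(\mathrm{rc}(F\cup c),x)$, one gets: if $F\subset\R$ is real closed and $c_1,\dots,c_j$ are algebraically independent over $FF_1$, then $D(F\cup\{c_1,\dots,c_j\},x)=D(F,x)$ for all $x\in F_1^n$; at stage $j$ one passes to $F'=\mathrm{rc}(F\cup\{c_1,\dots,c_{j-1}\})$, checks that $c_j$ is transcendental over $F'F_1\subseteq\mathrm{rc}(FF_1(c_1,\dots,c_{j-1}))$, and chains the crux with the induction hypothesis. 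Now, given $a$, let $j$ be the transcendence degree of $a$ over $F_0F_1$ and reorder so that $a_1,\dots,a_j$ are algebraically independent over $F_0F_1$ while $a_{j+1},\dots,a_k$ are algebraic over $F_0F_1(a_1,\dots,a_j)$; the coefficients of the polynomials witnessing this last algebraicity are rational functions in $a_1,\dots,a_j$ over $F_0F_1$ and hence involve only finitely many elements of $F_1$, which we collect into a finite set $b\subset F_1$. Then $a_{j+1},\dots,a_k$ are algebraic over $F_0(b)(a_1,\dots,a_j)$, so $\mathrm{rc}(F_0\cup b\cup\{a_1,\dots,a_j\})\supseteq\mathrm{rc}(F_0\cup a)$; and since $b\subset F_1$, the field $F_0'=\mathrm{rc}(F_0\cup b)$ satisfies $F_0'F_1\subseteq\mathrm{rc}(F_0F_1)$, so $a_1,\dots,a_j$ remain algebraically independent over $F_0'F_1$. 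For $x\in F_1^n$ we conclude $D(F_0\cup b,x)=D(F_0',x)=D(F_0'\cup\{a_1,\dots,a_j\},x)=D(F_0\cup b\cup\{a_1,\dots,a_j\},x)\subseteq D(F_0\cup a,x)$, the last step because enlarging the parameter set only shrinks $D$. Thus this $b$ works.

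The main obstacle is the single-transcendental claim, and inside it the linear-disjointness input: the descent of the one relation $P(x)=0$ to relations over $F$ succeeds precisely because $L=\mathrm{rc}(F(a_0))$ is a regular extension of $F\cap F_1$, which rests on the fact that real closed subfields of $\R$ are relatively algebraically closed in $\R$. Once this is in place, the reduction of general $a$ to a transcendence basis plus a purely algebraic tail, and the propagation of the crux along finite towers of real closures, are routine.
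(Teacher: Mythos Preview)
There is a genuine gap in the proof of your ``crux''. You assert that because $E=F\cap F_1$ is relatively algebraically closed in $L=\mathrm{rc}(F(a_0))$, the extension $L/E$ is regular and therefore $L$ is linearly disjoint from $F_1$ over $E$. Regularity of $L/E$ only yields linear disjointness from \emph{algebraic} extensions of $E$; to get linear disjointness from an arbitrary extension $F_1$ you would also need $L$ and $F_1$ to be algebraically free over $E$, and that fails in general. For a concrete instance, take algebraically independent reals $x,y,z$, set $F=\mathrm{rc}(\mathbb{Q}(x,y))$ and $F_1=\mathrm{rc}(\mathbb{Q}(z,xz+y))$; one checks $E=F\cap F_1=\mathbb{R}_{\mathrm{alg}}$, yet $1,x,y\in F\subseteq L$ satisfy the $F_1$-linear relation $(-(xz+y))\cdot 1+z\cdot x+1\cdot y=0$, so $L$ and $F_1$ are not linearly disjoint over $E$. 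If you run your algorithm on $\xi=(z,xz+y)\in F_1^2$ with $P(z_1,z_2)=z_2-xz_1-y$, the resulting $Q_i$ are $z_2,\,-z_1,\,-1$, none of which vanish at $\xi$.

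The repair is simple: take the basis over $F$ rather than over $F\cap F_1$. Since $a_0$ is transcendental over $FF_1$, the extension $L/F$ is free from $\mathrm{rc}(FF_1)/F$, and $F$ (being real closed) is relatively algebraically closed in $L$, so $L$ and $\mathrm{rc}(FF_1)$ \emph{are} linearly disjoint over $F$. Now write $c_\alpha=\sum_i d_{\alpha i}e_i$ with $d_{\alpha i}\in F$ and $e_i\in L$ linearly independent over $F$; since each $\sum_\alpha d_{\alpha i}x^\alpha$ lies in $FF_1\subseteq\mathrm{rc}(FF_1)$, linear disjointness forces each to vanish, and the rest of your argument goes through unchanged with $Q_i\in F[z]$. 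With this fix your approach is correct and genuinely different from the paper's: the paper never splits $a$ by transcendence type, but instead uses the Hilbert basis theorem to stabilize an intersection of varieties in the parameter space and then invokes the nonsingular-point dichotomy (Fact~\ref{algebraicfact}(3)). Your route is more field-theoretic and arguably more transparent about \emph{why} only finitely many elements of $F_1$ are needed, while the paper's route avoids the linear-disjointness machinery entirely.
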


\begin{proof}
Let $v\in\mathbb{R}^m$ be a tuple enumerating all elements of $a$. For each $x\in F_1^n$, let $p_x$ be a polynomial with $n+m$ free variables and coefficients in the field $F_0$ such that $p_x(x, v)=0$ and the set $A_x=\{u\in\mathbb{R}^n\colon p_x(u, v)=0\}$ is as small as possible. Such a polynomial exists by the Hilbert basis theorem. Let $B_x=\{w\in\mathbb{R}^m\colon p_x(x, w)=0\}$; this is an algebraic set containing $v$ as an element. Let $C=\bigcap\{B_x\colon x\in F_1^n\}$; this is an algebraic set containing $v$ as an element. Let $c\subset F_1^n$ be a finite set such that $C=\bigcap\{B_x\colon x\in c\}$. Let $b\subset F_1$ be a finite set such that all coordinates of points in $c$ can be found in the set $b$. I claim that the set $b$ works.

To show this, suppose that $x\in F_1^n$ is an arbitrary point, and work to show that $D(F_0\cup b, x)\subseteq D(F_0\cup a, x)$ holds. Use quantifier elimination for real closed fields to define the set $D(F_0\cup a, x)$ by a quantifier-free formula with parameters in $F_0\cup a$. There is a finite list of polynomials $q_i\colon i\in j$, each having parameters in $F_0$ and $n+m$ free variables such that the quantifier-free formula is a Boolean combination of formulas of the form $q_i(u, v)\geq 0$. Let $d=\{i\in j\colon q_i(x, v)=0\}$, let $D_x=\{u\in\mathbb{R}^n\colon q_i(u, v)=0$ for all $i\in n\}$, and let $O\subset\mathbb{R}^n$ be a basic open neighborhood of $x$ in which the polynomials $q_i(u, v)$ for $i\in j\setminus d$ do not change sign. Clearly, $A_x\subseteq D_x$ holds by the minimal choice of the polynomial $p_x$, and $D_x\cap O\subset D(F_0\cup a, x)$ holds by definitions.

Consider the set $E_x=\{u\in\mathbb{R}^d\colon\forall w\in C\ p_x(u, w)=0\}$. This is an algebraic set as well as a $F_0\cup b$-semi-algebraic set containing $x$ as an element. By its definition, $E_x\subseteq A_x$ holds. Thus, $E_x\cap O\subset D(F_0\cup a, x)$ must hold as well. Finally, since $D(F_0\cup b, x)\subseteq E_x$ holds by the definition of $D(F_0\cup b, x)$, it must be the case that $D(F_0\cup b, x)\cap O\subseteq D(F_0\cup a, x)$.

Now, consider the disjunction in Fact~\ref{algebraicfact}(3) applied to $A=D(F_0\cup b, x)$ and $B=D(F_0\cup a, x)$. It is impossible for the set $A\cap B$ to be nowhere dense in the set of nonsingular points of $A$, because $x\in O$ is a nonsingular point of $A$ by Proposition~\ref{elementaryproposition} and $B$ contains the whole neighborhood $A\cap O$ of $x$ in $A$. Thus, it must be the case that $A\subseteq B$ holds as desired.
\end{proof}

\noindent The final proposition of this section provides the key connection between real algebraic geometry and forcing.

\begin{proposition}
\label{productproposition}
Let $V[G_0], V[G_1]$ be mutually generic extensions and $n\geq 1$ be a number. Let $a\subset\mathbb{R}$ be a finite set in the model $V[G_1]$. Then for every $x\in\mathbb{R}^n\cap V[G_1]$, $D((\mathbb{R}\cap V)\cup a, x)=D((\mathbb{R}\cap V[G_0])\cup a, x)$.
\end{proposition}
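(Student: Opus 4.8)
Write $A:=D((\mathbb{R}\cap V[G_0])\cup a,x)$ and $A^*:=D((\mathbb{R}\cap V)\cup a,x)$. Since $(\mathbb{R}\cap V)\cup a\subseteq(\mathbb{R}\cap V[G_0])\cup a$, strictly more semialgebraic sets are available on the right, so $A\subseteq A^*$; the content of the proposition is the reverse inclusion. By Proposition~\ref{elementaryproposition}(1) both $A$ and $A^*$ are irreducible algebraic sets. If $A\neq A^*$, then $A$ is a proper algebraic subset of the irreducible set $A^*$, so by Fact~\ref{algebraicfact}(3) (applied with $A^*$ as the irreducible set and $B=A$) the set $A=A\cap A^*$ is nowhere dense in the set of nonsingular points of $A^*$; that set being a manifold of dimension $\dim A^*$, this forces $\dim A<\dim A^*$. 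Hence it suffices to prove $\dim A=\dim A^*$.

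The plan is to compute both dimensions as transcendence degrees. One checks easily that $D(F,x)$ depends only on the definable closure of the parameter set $F$ in the sense of real closed fields, and that the definable closure of $(\mathbb{R}\cap V)\cup a$ is the real closure $k$ of the subfield $(\mathbb{R}\cap V)(a)$; hence $A^*=D(k,x)$, which by the remark preceding Proposition~\ref{elementaryproposition} is the zero set of the prime ideal $\{p\in k[\vec X]\colon p(x)=0\}$. Therefore $\dim A^*=\operatorname{trdeg}\bigl(k(x)/k\bigr)=\operatorname{trdeg}\bigl((\mathbb{R}\cap V)(a,x)/(\mathbb{R}\cap V)(a)\bigr)$, and the identical computation with $V[G_0]$ in place of $V$ gives $\dim A=\operatorname{trdeg}\bigl((\mathbb{R}\cap V[G_0])(a,x)/(\mathbb{R}\cap V[G_0])(a)\bigr)$. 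Since transcendence degree is additive in towers and $a,x\in V[G_1]$, the equality $\dim A=\dim A^*$ reduces to: for every finite tuple $\vec c$ of reals of $V[G_1]$, $\operatorname{trdeg}(\vec c/\mathbb{R}\cap V[G_0])=\operatorname{trdeg}(\vec c/\mathbb{R}\cap V)$. One then applies this with $\vec c$ enumerating $a$, and with $\vec c$ enumerating $a$ together with the coordinates of $x$, and subtracts.

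This last statement — equivalently, that the fields $\mathbb{R}\cap V[G_0]$ and $\mathbb{R}\cap V[G_1]$ are algebraically independent over $\mathbb{R}\cap V$ — is exactly where the mutual genericity of $G_0$ and $G_1$ is used, and it is the main obstacle. The basic case is clean: a real $c\in V[G_1]$ algebraic over $\mathbb{R}\cap V[G_0]$ is a real root of a polynomial with coefficients in the real closed field $\mathbb{R}\cap V[G_0]$, hence lies in $\mathbb{R}\cap V[G_0]$, so $c\in V[G_0]\cap V[G_1]=V$, i.e.\ $c\in\mathbb{R}\cap V$. The general multivariable statement, also relative to parameters already lying in $\mathbb{R}\cap V[G_1]$, follows from this case by the standard elimination-theoretic reduction: from a hypothetical algebraic relation over $\mathbb{R}\cap V[G_0]$ among reals of $V[G_1]$ not already implied by one over $\mathbb{R}\cap V$, clear denominators and take the coefficient data lying in $\mathbb{R}\cap V[G_0]$ to be of minimal length; eliminating one of those coordinates by a resultant then yields either a shorter witness (contradicting minimality) or the algebraicity over the reals of $V[G_1]$ of a real of $V[G_0]\setminus V$, which the basic case excludes. (Alternatively one may simply quote this algebraic independence as a standard property of mutually generic extensions.) The first two steps are routine real algebraic geometry; the genuine difficulty, and the only place the hypothesis enters, is this algebraic independence of the two real fields.
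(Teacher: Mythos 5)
Your reduction of the proposition to the transcendence-degree statement is sound, and it is a genuinely different route from what the paper does. Your plan: use irreducibility and Fact~\ref{algebraicfact}(3) to show that the easy inclusion $A\subseteq A^*$ can be strict only if $\dim A<\dim A^*$, translate both dimensions into transcendence degrees, absorb the parameter set $a$ by additivity of transcendence degree, and thereby reduce everything to the claim that the real fields $\mathbb{R}\cap V[G_0]$ and $\mathbb{R}\cap V[G_1]$ are algebraically independent over $\mathbb{R}\cap V$. The paper instead absorbs $a$ by observing that $D(F\cup a,x)$ is the $v$-section of $D(F,v^\smallfrown x)$ and then quotes a ready-made result about Noetherian topologies in mutually generic extensions (\cite[Corollary 2.7]{z:distance}) to finish the $a=\emptyset$ case in one line; that cited result \emph{is} the content of your transcendence-degree claim in another dress.

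The gap is exactly where you say the ``genuine difficulty'' is, and your sketch does not close it. The claim you need is, in effect, that $\mathbb{R}\cap V[G_0]$ and $\mathbb{R}\cap V[G_1]$ are free (equivalently, linearly disjoint) over $\mathbb{R}\cap V$. Your ``basic case'' is correct but proves only the univariate statement, which amounts to $(\mathbb{R}\cap V[G_0])\cap(\mathbb{R}\cap V[G_1])=\mathbb{R}\cap V$; freeness is \emph{strictly stronger} than having trivial intersection, and is false for general pairs of real closed subfields with trivial intersection. So the multivariable claim does not ``follow from this case by the standard elimination-theoretic reduction.'' When you try to run the minimality/resultant argument, you produce one linear relation $\sum_\alpha a_\alpha \vec c^{\,\alpha}=0$ among the $a_\alpha\in\mathbb{R}\cap V[G_0]$, with coefficients $\vec c^{\,\alpha}\in\mathbb{R}\cap V[G_1]$; eliminating one $a_\alpha$ leaves you with a shorter relation whose coefficients live in the composite field $\bigl(\mathbb{R}\cap V[G_0]\bigr)\bigl(\mathbb{R}\cap V[G_1]\bigr)$, not in $\mathbb{R}\cap V[G_0]$, so the minimality hypothesis is not preserved and no contradiction with the basic case is produced. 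Freeness of the two real fields \emph{is} true for mutually generic extensions, but it requires a genuine forcing argument (or an appeal to the same kind of Noetherian-topology lemma the paper cites), and cannot be dismissed as ``a standard property'' without proof. As it stands, you have correctly relocated the entire content of the proposition into this one unproved claim. If you want to complete this route, you should either prove the freeness statement directly from mutual genericity, or recognize that it is equivalent to the $a=\emptyset$ case of the proposition itself and invoke an external reference of comparable strength, as the paper does.

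Two smaller points worth making explicit if you write this up: the step ``$A$ is nowhere dense in the nonsingular locus of $A^*$, hence $\dim A<\dim A^*$'' uses that a semialgebraic subset which is nowhere dense in a $\delta$-dimensional semialgebraic manifold has dimension $<\delta$ (true, but cite it); and the identification $\dim A^*=\operatorname{trdeg}(k(x)/k)$ uses that $A^*$, being the real zero set of the prime ideal $I_x\subseteq k[\vec X]$ and possessing the nonsingular point $x$, has real dimension equal to the Krull dimension of $k[\vec X]/I_x$. Both are standard real algebraic geometry but are not in the paper's toolbox as stated.
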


\begin{proof}
Let $v\in\mathbb{R}^m$ be a finite sequence enumerating all elements of $a$. It follows directly from the definitions that $D((\mathbb{R}\cap V)\cup a, x)$ is just the $v$-th section of the set $D(\mathbb{R}\cap V, v^\smallfrown x)$, and similarly for $D((\mathbb{R}\cap V[G_0])\cup a, x)$. However, the sets $D(\mathbb{R}\cap V, v^\smallfrown x)$ and $D(\mathbb{R}\cap V[G_0], v^\smallfrown x)$ are equal by \cite[Corollary 2.7]{z:distance} applied to the Noetherian topology of algebraic sets.
\end{proof}

\section{The coloring poset}
\label{posetsection}

The model for Theorem~\ref{maintheorem} is obtained as a generic extension of the choiceless Solovay model \cite[Theorem 26.14]{jech:newset} via a definable $\gs$-closed coloring poset. The coloring poset can be stated for a more general class of hypergraphs of arity three.

\begin{definition}
A hypergraph $\Gamma$ on a Polish space $X$ of arity $n$ is \emph{$2$-irreflexive} if in the closure of $\Gamma$ in the space $[X]^{\leq n}$ with the Vietoris topology there are no sets of cardinality two.
\end{definition}

\noindent An example of a hypergraph which is $2$-irreflexive is the hypergraph of equilateral triangles on $\mathbb{R}^d$ if $d\geq 2$ is a number. An example of a hypergraph of arity three which is not $2$-irreflexive is the hypergraph of isosceles triangles on $\mathbb{R}^d$. $2$-irreflexive hypergraphs carry the following parameter:

\begin{definition}
\label{epsdefinition}
Let $X$ be a Euclidean space and $\Gamma$ be a $2$-irreflexive algebraic hypergraph on $X$. Let $A\subset\mathbb{R}$ be a set and $x\in X$ be a point. $\eps(A, x)$ is the largest number $\eps>0$ such that there is no set $b\subset X$ consisting only of points closer than $\eps$ to $x$ such that for every $y\in D(A, x)$ which is not equal to $x$ or to any element of $b$  $b\cup\{y\}\in\Gamma$ holds. If no such number $\eps$ exists, we put $\eps(A, x)=0$; if there are arbitrarily large such numbers $\eps$, put $\eps(A, x)=\infty$.
\end{definition}

\begin{proposition}
\label{epsproposition}
Let $\Gamma$ be an algebraic, $2$-irreflexive hypergraph of arity three on a Euclidean space $X$ of dimension $d\geq 1$.

\begin{enumerate}
\item if $A\subset\mathbb{R}$ then $\eps(A, x)=0$ if and only if $x$ is algebraic over $A$;
\item if $A\subset B\subset\mathbb{R}$ then $\eps(B, x)\leq\eps(A, x)$;
\item if $F\subset\mathbb{R}$ is a real closed subfield and $x\notin F^d$ is a point, then there is no set $b\subset F^d$ consisting of points closer than $\eps(F, x)$ to $x$ such that $b\cup\{x\}\in\Gamma$.
\end{enumerate}
\end{proposition}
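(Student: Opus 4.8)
The plan is to prove the three parts by unwinding Definition~\ref{epsdefinition} and invoking the structure of $D(A,x)$ from Proposition~\ref{elementaryproposition} together with the $2$-irreflexivity hypothesis.

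For part (1), the point is that $\eps(A,x)=0$ exactly when no positive $\eps$ works, i.e.\ for arbitrarily small $\eps>0$ there is a set $b$ of points $\eps$-close to $x$ with $b\cup\{y\}\in\Gamma$ for every $y\in D(A,x)\setminus(\{x\}\cup b)$. First I would observe that if $x$ is algebraic over $A$ then $D(A,x)=\{x\}$ (there are finitely many $A$-conjugates of $x$, and an $A$-semialgebraic formula can isolate $x$ among them by a rational-coefficient inequality separating it from the others, so $\{x\}$ is $A$-semialgebraic and algebraic), hence the condition ``for every $y\in D(A,x)\setminus(\{x\}\cup b)$'' is vacuous and one may take $b=\emptyset$ for every $\eps$; thus $\eps(A,x)=0$. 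Conversely, if $x$ is not algebraic over $A$, then $D(A,x)$ is a positive-dimensional irreducible algebraic set with $x$ nonsingular on it (Proposition~\ref{elementaryproposition}(1),(2)); I would use $2$-irreflexivity to derive a contradiction from a witnessing $b$ for small $\eps$. The idea: the hyperedges $b\cup\{y\}$ for all the uncountably many $y\in D(A,x)$ near $x$ are hyperedges of $\Gamma$, and letting $y\to x$ along the (locally uncountable, connected in the Euclidean topology near a nonsingular point) set $D(A,x)$, the hyperedge $b\cup\{y\}$ converges in the Vietoris topology to a set of size $\leq |b|+1$; since all the $y$'s and all points of $b$ lie within $\eps$ of $x$, as $\eps\to 0$ we can extract a limit hyperedge of $\Gamma$ concentrated at $x$, and a counting/pigeonhole argument on the finitely many possible sizes of $b$ forces a limit of cardinality exactly two — contradicting $2$-irreflexivity. (One must be slightly careful that $b$ may vary with $\eps$; but its size is bounded along a subsequence, and one takes limits along that subsequence.)

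Part (2) is the monotonicity statement and should be essentially immediate: if $A\subseteq B$ then every $B$-semialgebraic set is $A$-semialgebraic-with-extra-parameters is wrong in general, but the relevant containment $D(B,x)\subseteq D(A,x)$ does hold because $D(A,x)$ is itself a $B$-semialgebraic (indeed $A$-semialgebraic) algebraic set containing $x$, so the minimal such set computed over $B$ can only be smaller. Given $D(B,x)\subseteq D(A,x)$, any set $b$ witnessing that a given $\eps$ fails for $A$ — i.e.\ $b\cup\{y\}\in\Gamma$ for all $y\in D(A,x)\setminus(\{x\}\cup b)$ — also witnesses that $\eps$ fails for $B$, since $D(B,x)\setminus(\{x\}\cup b)\subseteq D(A,x)\setminus(\{x\}\cup b)$. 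Hence every $\eps$ that fails for $A$ fails for $B$, so the supremum $\eps(B,x)\leq\eps(A,x)$.

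For part (3), let $F$ be a real closed subfield, $x\notin F^d$, so $\eps(F,x)>0$ by part (1) and $D(F,x)\neq\{x\}$ by Proposition~\ref{elementaryproposition}(3). Suppose toward a contradiction that there is $b\subset F^d$ with all points of $b$ within $\eps(F,x)$ of $x$ and $b\cup\{x\}\in\Gamma$. I would argue that $b$ then extends to a witness that some $\eps<\eps(F,x)$ fails, contradicting maximality — or more cleanly, that $b$ itself (or $b$ together with finitely many further $F$-rational points) witnesses the failure of every $\eps$ slightly below $\eps(F,x)$, which is absurd. The mechanism: the condition ``$b\cup\{y\}\in\Gamma$ for every $y\in D(F,x)\setminus(\{x\}\cup b)$'' is a statement about the $F$-semialgebraic set $D(F,x)$ and the $F$-points $b$; the set of $y$ in $D(F,x)$ for which $b\cup\{y\}\in\Gamma$ is (relatively) semialgebraic with parameters in $F\cup(\text{coords of }b)\subseteq F$, and it contains $x$; since $x$ is nonsingular on the irreducible set $D(F,x)$ and $\Gamma$ is algebraic so $\{y: b\cup\{y\}\in\Gamma\}$ is closed, Fact~\ref{algebraicfact}(3)-style density considerations (or rather: an $F$-definable subset of $D(F,x)$ containing the nonsingular $F$-generic-looking point $x$ must be co-nowhere-dense or all of it) let me conclude the condition holds for a dense open subset of $D(F,x)$ near $x$, hence — after possibly absorbing finitely many exceptional points into $b$, all of which are $F$-algebraic hence in $F^d$ and close to $x$ — for all $y\in D(F,x)$ near $x$. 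That exhibits a witnessing $b$ for arbitrarily small $\eps$, forcing $\eps(F,x)=0$, contradiction.

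The main obstacle I anticipate is the limiting/Vietoris-convergence argument in part (1)'s converse direction and the matching density argument in part (3): in both cases one must pass from ``the hyperedge condition holds at the single point $x$ (or for a dense set of $y$)'' to ``it holds on a full Euclidean neighborhood within $D(F,x)$'', and then control what happens in the Vietoris limit as $\eps\to 0$, making sure the limit hyperedge genuinely has cardinality two (not one, and not three) so that $2$-irreflexivity bites. Getting the bookkeeping right — that $b$ can be taken of bounded size, that the exceptional $F$-algebraic points can be folded into $b$ without leaving the $\eps$-ball, and that a size-two limit is unavoidable — is the technical heart; everything else is definitional.
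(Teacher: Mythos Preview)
Your outline for parts (2) and (3) is in the right spirit, but there is a genuine gap in your plan for the converse direction of (1), and you are making (3) much harder than it needs to be.

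For (1), your strategy is to let \emph{both} $b=b_\eps$ and $y$ collapse to $x$ as $\eps\to 0$, and then argue that a Vietoris limit of cardinality two must appear. It will not: once every point of $b_\eps\cup\{y\}$ lies within $\eps$ of $x$, the Vietoris limit is the singleton $\{x\}$, and $2$-irreflexivity says nothing about singletons. No pigeonhole on $|b|$ recovers a two-point limit from a sequence of three-point sets shrinking to a point. The missing idea is to \emph{fix} a single $y\in D(A,x)\setminus\{x\}$ once and for all. For each $\eps>0$ small enough that $y$ is outside the $\eps$-ball (hence $y\notin b_\eps$), the hypothesized witness $b_\eps$ satisfies $b_\eps\cup\{y\}\in\Gamma$; as $\eps\to 0$ these hyperedges converge to $\{x,y\}$, which has cardinality exactly two since $y\neq x$ is fixed. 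That is the contradiction with $2$-irreflexivity, and it is a one-line argument once $y$ is held still.

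For (3), you correctly identify the key set $\{y:b\cup\{y\}\in\Gamma\}$ as $F$-definable and containing $x$, but you then reach for Fact~\ref{algebraicfact}(3), nonsingularity, density, and absorbing exceptional points. None of this is needed: since $\Gamma$ is algebraic and $b\subset F^d$, the set $B=\{y:b\cup\{y\}\in\Gamma\}\cup b$ is an $F$-semialgebraic \emph{algebraic} set containing $x$, so by the very definition of $D(F,x)$ one has $D(F,x)\subseteq B$. Hence $b\cup\{y\}\in\Gamma$ for \emph{every} $y\in D(F,x)\setminus(b\cup\{x\})$, which means $b$ already witnesses the failure of $\eps=\eps(F,x)$ itself---an immediate contradiction. (Your final clause ``for arbitrarily small $\eps$'' is a slip: $b$ has fixed positive distances to $x$, so it only witnesses $\eps$ down to that maximum distance; but that is already below $\eps(F,x)$, which is all you need.)
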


\begin{proof}
For (1), recall that $x$ is algebraic over $A$ if and only if $D(A, x)=\{x\}$ by Proposition~\ref{elementaryproposition}(3). Now, if $D(A, x)=\{x\}$ then $\eps(A, x)=0$ by the definitions. If $D(A, x)\neq\{x\}$ then pick a point $y\in D(A, x)$ distinct from $x$. If for every $\eps>0$ there is a set $b_\eps$ consisting of points closer to $x$ than $\eps$ such that $b_\eps\cup\{y\}\in\Gamma$, then $\{x, y\}$ belongs to the closure of $\Gamma$ in the space $X^{\leq 3}$, contradicting the assumption of $2$-irreflexivity on $\Gamma$. Thus, $\eps(A, x)>0$ holds as desired.

(2) follows directly from the definitions, noting that $D(B, x)\subseteq D(A, x)$. For (3), suppose towards a contradiction that $b\subset F^d$ is a set consisting of points closer than $\eps(F, x)$ to $x$ such that $b\cup\{x\}\in\Gamma$. Let $B=\{y\in X\colon b\cup\{y\}\in\Gamma\}\cup b$. This is an $F$-algebraic set containing $x$, and therefore $D(F, x)\subseteq B$ holds. It follows that for every point $y\in D(F, x)\setminus (b\cup\{x\})$, $b\cup\{y\}\in\Gamma$ holds, and by the definitions, the set $b$ cannot consist of points which are closer than $\eps(F, x)$ to $x$.
\end{proof}

For the rest of the section, fix a number $d\geq 1$ and an algebraic hypergraph $\Gamma$ of arity three on $\mathbb{R}^d$ which is $2$-irreflexive. I will produce a balanced coloring poset for $\Gamma$. It will be useful to fix some initial data. The set of colors is the set of metric open balls in $\mathbb{R}^d$ which have a rational center and a rational radius. By a (partial) coloring I mean a (partial) map $f$ from $\mathbb{R}^d$ to the set of colors such that for each $x\in\dom(f)$, $x\in f(x)$ holds. Let $I$ be the ideal on the set $\mathbb{Q}^+$ which consists of those sets whose only accumulation point is zero.

\begin{definition}
\label{posetdefinition}
The coloring poset $P$ consists of all countable partial colorings $p$ such that for some countable real closed subfield $\supp(p)\subset\mathbb{R}$ it is the case that $\dom(p)=\supp(p)^n$. The ordering is given by $q\leq p$ if 

\begin{enumerate}
\item $p\subseteq q$;
\item for every point $x\in\dom(q)\setminus\dom(p)$, the metric diameter of $q(x)$ is smaller than $\eps (\supp(p), x)$;
\item for every finite set $a\subset\supp(q)$, writing $C(p, q, a)=\{x\in\dom(q\setminus p)\colon \eps(\supp(p)\cup a, x)\leq\diam(q(x))\}$ and $q^*A=\{\diam(q(x))\colon x\in A\}$, then $q^*C(p, q, a)\in I$ holds.
\end{enumerate}
\end{definition}

\noindent The properties of $P$ must be checked in turn.

\begin{proposition}
$\leq$ is a transitive relation.
\end{proposition}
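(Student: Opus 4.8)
The plan is to fix $r\leq q$ and $q\leq p$ in $P$ and verify the three clauses of Definition~\ref{posetdefinition} for $r\leq p$. Clause~(1) is immediate from $p\subseteq q\subseteq r$. Throughout I will use the elementary observation that $p\subseteq q$ forces $\supp(p)\subseteq\supp(q)$, since $\supp(p)^d=\dom(p)\subseteq\dom(q)=\supp(q)^d$; by Proposition~\ref{epsproposition}(2) this gives the monotonicity $\eps(\supp(q)\cup c,x)\leq\eps(\supp(p)\cup c,x)$ for every finite $c\subset\mathbb{R}$ and every $x$.

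For clause~(2), fix $x\in\dom(r)\setminus\dom(p)$ and split according to whether $x\in\dom(q)$. If $x\in\dom(q)$, then $r(x)=q(x)$ and clause~(2) for $q\leq p$ gives $\diam(r(x))<\eps(\supp(p),x)$ directly. If $x\notin\dom(q)$, then clause~(2) for $r\leq q$ gives $\diam(r(x))<\eps(\supp(q),x)$, and $\eps(\supp(q),x)\leq\eps(\supp(p),x)$ by the monotonicity just noted.

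Clause~(3) is the substantive point. Fix a finite $a\subset\supp(r)$; the goal is $r^*C(p,r,a)\in I$. Since $I$ is an ideal, it suffices to split $C(p,r,a)$ into its intersection with $\dom(q)$ and its complement and bound the $r^*$-image of each piece by an element of $I$. On $C(p,r,a)\setminus\dom(q)$ one has $\supp(p)\cup a\subseteq\supp(q)\cup a$, hence $\eps(\supp(q)\cup a,x)\leq\eps(\supp(p)\cup a,x)\leq\diam(r(x))$, so that $C(p,r,a)\setminus\dom(q)\subseteq C(q,r,a)$; clause~(3) for $r\leq q$, applied with the finite set $a\subset\supp(r)$, then gives $r^*C(q,r,a)\in I$. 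On $C(p,r,a)\cap\dom(q)$, where $r$ and $q$ agree, the subtlety is that $a$ need not lie in $\supp(q)$, so this set is not literally of the form $C(p,q,a')$. This is exactly what Proposition~\ref{reflectionproposition} absorbs: applied to the real closed subfields $F_0=\supp(p)$, $F_1=\supp(q)$, the dimension $d$, and the finite set $a$, it produces a finite $b\subset\supp(q)$ with $D(\supp(p)\cup b,x)\subseteq D(\supp(p)\cup a,x)$ for every $x\in\supp(q)^d=\dom(q)$. Reading off the definition of $\eps$ exactly as in the proof of Proposition~\ref{epsproposition}(2), this inclusion of $D$-sets yields $\eps(\supp(p)\cup b,x)\leq\eps(\supp(p)\cup a,x)$ for all such $x$, so $C(p,r,a)\cap\dom(q)\subseteq C(p,q,b)$. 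Clause~(3) for $q\leq p$, applied with the finite set $b\subset\supp(q)$, gives $q^*C(p,q,b)\in I$, and since $r$ agrees with $q$ on $\dom(q)$ this bounds $r^*(C(p,r,a)\cap\dom(q))$ by an element of $I$. The union of the two pieces gives $r^*C(p,r,a)\in I$, completing clause~(3).

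The only nontrivial step is the inner half of clause~(3): the mismatch between the parameter set $a\subset\supp(r)$ and the field $\supp(q)$ over which the intermediate condition $q$ is defined is precisely the mismatch Proposition~\ref{reflectionproposition} was designed to handle. Everything else is routine bookkeeping with the monotonicity of $\eps$ in its first coordinate and with $I$ being closed under subsets and finite unions.
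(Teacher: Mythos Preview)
Your proof is correct and follows essentially the same approach as the paper: the same case split for clause~(2), and for clause~(3) the same appeal to Proposition~\ref{reflectionproposition} (with $F_0=\supp(p)$, $F_1=\supp(q)$) to replace the parameter set $a\subset\supp(r)$ by a finite $b\subset\supp(q)$, yielding $C(p,r,a)\subseteq C(p,q,b)\cup C(q,r,a)$. You spell out the decomposition by $\dom(q)$ and the passage from the $D$-set inclusion to the $\eps$-inequality more explicitly than the paper, which simply asserts the containment as ``clear,'' but the argument is the same.
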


\begin{proof}
Let $r\leq q\leq p$ be conditions in the coloring poset $P$; I must show that $r\leq p$ holds. For Definition~\ref{posetdefinition}(1), since $p\subseteq q$ and $q\subseteq r$ holds, $p\subseteq r$ certainly follows. For Definition~\ref{posetdefinition}(2), suppose that $x\in\dom(r)\setminus\dom(p)$ is a point. If $x\in\dom(q)$, then Definition~\ref{posetdefinition}(2) applied to $q\leq p$ implies that the diameter of $r(x)=q(x)$ is smaller than $\eps(\supp(p), x)$. If $x\in\dom(r)\setminus\dom(q)$, then Definition~\ref{posetdefinition}(2) applied to $r\leq q$ implies that $\diam(r(x))<\eps(\supp(q), x)\leq\eps(\supp(p), x)$ where the second inequality follows from Proposition~\ref{epsproposition}(2).

The key point is the verification of Definition~\ref{posetdefinition}(3). Suppose that $a\subset\supp(r)$ is a finite set. By Proposition~\ref{reflectionproposition}, find a finite set $b\subset\supp(q)$ such that for every point $x\in\dom(q)$, $D(\supp(p)\cup b, x)\subseteq D(\supp(p)\cup a, x)$. It is then clear that the set $C(p, r, a)$ is a subset of $C(p, q, b)\cup C(q, r, a)$. Now, $q^*C(p, q, b)\in I$ and $r^*C(q, r, a)\in I$ holds by Definition~\ref{posetdefinition}(3) applied to $q\leq p$ and to $r\leq q$. Thus, $r^*C(p,r,a)\in I$ and the proof is complete.
\end{proof}

\begin{proposition}
\label{densityproposition}
For every condition $p\in P$ and every point $x\in \mathbb{R}^d$ there is a condition $q\leq p$ such that $x\in\dom(q)$.
\end{proposition}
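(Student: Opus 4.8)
The plan is to extend $p$ to a condition $q$ whose support is the real closure $F$ of $\supp(p)\cup\{x_0,\dots,x_{d-1}\}$ (where $x=(x_0,\dots,x_{d-1})$), and then to define $q$ on the new points $\dom(q)\setminus\dom(p)=F^d\setminus\supp(p)^d$ by assigning to each new point $y$ a rational ball containing $y$ whose diameter is small enough to satisfy the two ordering requirements. So the real work is purely in choosing these diameters. First I would fix, for each new point $y\in F^d\setminus\supp(p)^d$, a positive rational $\gd(y)$ and let $q(y)$ be any color (rational ball) containing $y$ of diameter $<\gd(y)$; the remaining task is to choose the values $\gd(y)$ so that Definition~\ref{posetdefinition}(2) and (3) hold.

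For Definition~\ref{posetdefinition}(2) I simply demand $\gd(y)<\eps(\supp(p),y)$. This is possible because $y\notin\supp(p)^d$ (as $y\in\dom(q)\setminus\dom(p)$ and $\supp(p)$ is real closed), so $y$ is not algebraic over $\supp(p)$, hence $\eps(\supp(p),y)>0$ by Proposition~\ref{epsproposition}(1). For Definition~\ref{posetdefinition}(3) the point is that there are only countably many finite sets $a\subset\supp(q)=F$, and for each such $a$ and each new point $y$, Proposition~\ref{epsproposition}(1) gives $\eps(\supp(p)\cup a,y)>0$ precisely when $y$ is not algebraic over $\supp(p)\cup a$; when $y$ \emph{is} algebraic over $\supp(p)\cup a$ we instead have $\eps(\supp(p)\cup a,y)=0$, which forces $y\in C(p,q,a)$ no matter how small $\gd(y)$ is. So I would first enumerate $\dom(q)\setminus\dom(p)=\{y_k\colon k\in\gw\}$ and enumerate the finite subsets of $F$ as $\{a_k\colon k\in\gw\}$, and then choose $\gd(y_k)$ so small that $\gd(y_k)<\eps(\supp(p),y_k)$ and, in addition, $\gd(y_k)<\eps(\supp(p)\cup a_i,y_k)$ for every $i\le k$ for which this quantity is positive, and also $\gd(y_k)<2^{-k}$. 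With this choice, for a fixed finite $a=a_i\subset F$, the set $C(p,q,a)$ is contained in $\{y_k\colon k<i\}\cup\{y_k\colon y_k\text{ is algebraic over }\supp(p)\cup a\}$.

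It remains to see that $q^*C(p,q,a)\in I$, i.e.\ that the only accumulation point of the set of diameters $\{\diam(q(y))\colon y\in C(p,q,a)\}$ is $0$. The finite part $\{y_k\colon k<i\}$ contributes only finitely many values and so is harmless. For the algebraic part, the key geometric input is Proposition~\ref{epsproposition}(3) together with $2$-irreflexivity: if $y$ is algebraic over the real closed field $F_a:=$ real closure of $\supp(p)\cup a$, then either $y\in F_a^d$, or, writing $\eps=\eps(F_a,y)$, there is no hyperedge through $y$ using only points of $F_a^d$ within distance $\eps$ of $y$. The standard balancedness argument (as in the analogous constructions in \cite{z:geometric}) shows that along any sequence of such algebraic $y$'s accumulating at a point, the associated values $\eps(F_a,\cdot)$ — and hence the diameters $\gd(y)$, which were chosen below them — must tend to $0$, because an accumulation of hyperedges would otherwise produce a two-element set in the Vietoris closure of $\Gamma$. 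Concretely, I would argue that for any $\eps'>0$ there are only finitely many $y\in C(p,q,a)$ with $\diam(q(y))\ge\eps'$: each such $y$ is algebraic over $F_a$ but has $\gd(y)\ge\eps'$, so by construction $\eps(F_a,y)$ is not defined to exceed $\eps'$ in the relevant clause, which by Proposition~\ref{epsproposition}(1),(3) and $2$-irreflexivity pins $y$ into a bounded-below, discrete (hence finite) configuration.

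I expect the last step — verifying $q^*C(p,q,a)\in I$ for the algebraic points $y$ — to be the main obstacle, since it is the only place where $2$-irreflexivity and the precise definition of $\eps(A,x)$ are genuinely used; everything else is bookkeeping with countably many requirements and the positivity statements of Propositions~\ref{elementaryproposition} and \ref{epsproposition}. In particular one must be careful that $\supp(p)\cup a$ need not be a field, so Proposition~\ref{epsproposition}(3) is applied to its real closure $F_a\subseteq\supp(q)$, and one must check $D(F_a,y)=D(\supp(p)\cup a,y)$ holds closely enough that the $\eps$-values controlling $C(p,q,a)$ are the ones we arranged to bound the $\gd(y)$'s.
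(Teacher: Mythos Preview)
Your proposal has one genuine gap and one large detour.

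\textbf{The gap.} You never verify that $q$ is a $\Gamma$-coloring, i.e.\ that no hyperedge in $\dom(q)$ is monochromatic; this is part of the requirement $q\in P$. The paper handles this by additionally choosing $q\restriction(\dom(q)\setminus\dom(p))$ to be an \emph{injection}. Then a hyperedge $e\subset\dom(q)$ with two or three new vertices is automatically non-monochromatic, a hyperedge entirely in $\dom(p)$ is handled because $p$ is a coloring, and a hyperedge with exactly one new vertex $y$ is handled by Proposition~\ref{epsproposition}(3): the other two vertices lie in $\supp(p)^d$, and since $\diam(q(y))<\eps(\supp(p),y)$ they cannot both lie in $q(y)$. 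Your proposal is silent on all of this, and in particular nothing in your construction forces injectivity on the new points.

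\textbf{The detour.} Your analysis of item~(3) is vastly more complicated than needed, and the ``main obstacle'' you flag is not an obstacle at all. You already require $\gd(y_k)<2^{-k}$; that single condition gives, for every $\eps'>0$, that only finitely many new points $y_k$ have $\diam(q(y_k))\ge\eps'$. Hence the entire image set $q^*(\dom(q)\setminus\dom(p))$ lies in $I$, and a fortiori so does $q^*C(p,q,a)$ for every finite $a\subset\supp(q)$. None of the bookkeeping with the $a_i$, none of the case-splitting on whether $y$ is algebraic over $\supp(p)\cup a$, and none of the appeal to $2$-irreflexivity is necessary here. The paper's proof makes this explicit: it simply fixes in advance an infinite set $b\subset\mathbb{Q}^+$ converging to $0$ and takes every new diameter from $b$; then $q^*C(p,q,a)\subseteq b\in I$ for free. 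The $2$-irreflexivity of $\Gamma$ is used only to ensure $\eps(\supp(p),y)>0$ (Proposition~\ref{epsproposition}(1)) and in the coloring verification via Proposition~\ref{epsproposition}(3), not in checking item~(3) of the ordering.
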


\begin{proof}
Fix $p\in P$ and $x\in\mathbb{R}^d$. Let $G\subset\mathbb{R}$ be any countable real closed subfield of $\mathbb{R}$ containing $\supp(p)$ as a subset and coordinates of the point $x$ as elements. Choose an infinite set $b$ of positive rationals converging to zero. Let $q$ be a function with domain $G^d$ which extends $p$ such that for every $y\in G^d\setminus\dom(p)$ $q(y)$ is a basic open neighborhood of $y$ of diameter which is in the set $b$ and smaller than $\eps(\supp(p), y)$ and such that $q\restriction G^d\setminus\dom(p)$ is an injection. It will be enough to show that $q\in P$ is a condition stronger than $p$.

To show that $q$ is a coloring, suppose that $e$ is a hyperedge in $\dom(q)$, and work to show that $e$ is not monochromatic. If all its vertices belong to $p$, then this follows from $p$ being a coloring. If more than one of its vertices belongs to $\dom(q\setminus p)$ then this follows from $q$ being an injection on this set. Finally, in the event that $e$ has exactly one vertex $y$ in the set $\dom(q\setminus p)$, apply Proposition~\ref{epsproposition}(3) to show that $e$ is not monochromatic in this case either.

Now, to show that $q\leq p$, it is clear that $p\subseteq q$ holds. For every point $y\in\dom(q\setminus p)$, the value $q(y)$ is a basic open neighborhood whose diameter is smaller than $\eps(\supp(y), y)$. This verifies Definition~\ref{posetdefinition}(2) for $p$ and $q$. Finally, for any finite set $a\subset\supp(q)$ the set $q^*C(p, q, a)$  belongs to the ideal $I$, because it is a subset of $q^*\dom(q\setminus p)$ which is in turn a subset of $b$. This verifies Definition~\ref{posetdefinition}(3) for $p$ and $q$ and completes the proof. 
\end{proof}

\begin{proposition}
\label{sigmaclosureproposition}
$\leq$ is $\gs$-closed.
\end{proposition}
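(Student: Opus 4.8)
The plan is to take a descending $\omega$-sequence of conditions $p_0 \geq p_1 \geq p_2 \geq \cdots$ in $P$ and produce a lower bound $q$. The natural candidate is $q = \bigcup_n p_n$, whose domain is $\big(\bigcup_n \supp(p_n)\big)^d$; this union of real closed subfields is again a real closed subfield, and it is countable as a countable union of countable sets, so $q$ is a legitimate candidate for a condition once we check it is a coloring. That $q$ is a partial coloring (each point lies in its assigned ball) is immediate since each $p_n$ is. That $q$ has no monochromatic hyperedge: given a hyperedge $e \subseteq \dom(q)$, its three vertices lie in $\dom(p_n)$ for some single large $n$ (each vertex appears at some stage, take the max), and $p_n$ is a coloring, so $e$ is not monochromatic under $p_n \subseteq q$.

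Next I would verify $q \leq p_n$ for every $n$; it suffices to do this for each fixed $n$ and the tail $(p_m)_{m \geq n}$, so without loss of generality I check $q \leq p_0$. Clause (1) is clear since $p_0 \subseteq q$. For clause (2), take $x \in \dom(q) \setminus \dom(p_0)$; then $x \in \dom(p_m) \setminus \dom(p_{m-1})$ for some $m \geq 1$ (more precisely, $x$ first enters at some stage), and by clause (2) for $p_m \leq p_{m-1}$ together with iterated applications of Proposition~\ref{epsproposition}(2) along $\supp(p_0) \subseteq \supp(p_{m-1})$, we get $\diam(q(x)) = \diam(p_m(x)) < \eps(\supp(p_{m-1}), x) \leq \eps(\supp(p_0), x)$.

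The main obstacle is clause (3): given a finite set $a \subset \supp(q)$, I must show $q^*C(p_0, q, a) \in I$, i.e.\ the set of diameters $\{\diam(q(x)) : x \in \dom(q \setminus p_0),\ \eps(\supp(p_0) \cup a, x) \leq \diam(q(x))\}$ has zero as its only accumulation point. Since $a$ is finite and $\supp(q) = \bigcup_n \supp(p_n)$, we have $a \subset \supp(p_N)$ for some $N$. The idea is to split $\dom(q \setminus p_0) = \big(\dom(p_N \setminus p_0)\big) \cup \big(\dom(q \setminus p_N)\big)$ and observe $C(p_0, q, a)$ is contained in the corresponding union. On the first (countable, fixed) piece: using Proposition~\ref{reflectionproposition} iteratively we can find, at each intermediate stage $0 \leq i < N$, a finite $b_{i} \subset \supp(p_{i+1})$ with $D(\supp(p_i) \cup b_i, x) \subseteq D(\supp(p_i) \cup a_i, x)$ where $a_i$ is the previously-produced set (starting from $a_{N-1} = a$, working downward), so that membership in $C(p_0, q, a)$ on $\dom(p_N \setminus p_0)$ gets absorbed into a finite union $\bigcup_{i<N} C(p_i, p_{i+1}, b_i)$, each of which is in $I$ by clause (3) for $p_{i+1} \leq p_i$, and a finite union of $I$-sets is in $I$. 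On the second piece $\dom(q \setminus p_N)$, I would argue directly as in the proof of Proposition~\ref{densityproposition}: the diameters of $q(x)$ for $x$ entering at stages beyond $N$ were chosen (inside each $p_m$, $m > N$) to converge to zero, so their union over all such $x$ still has zero as its only accumulation point — this requires noting that each $p_m \leq p_{m-1}$ already forces $q^*\dom(p_m \setminus p_{m-1})$ into $I$ (taking $a = \emptyset$ in clause (3), or more simply from how the sequence was built), and a careful bookkeeping shows the countable union remains in $I$ because for each $\eps > 0$ only finitely many of these diameters exceed $\eps$. Combining the two pieces, $q^*C(p_0, q, a) \in I$, and $\leq$ is $\sigma$-closed.
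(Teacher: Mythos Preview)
Your argument for clause~(3) has a real gap in the treatment of the ``second piece'' $\dom(q\setminus p_N)$. The ideal $I$ is \emph{not} a $\sigma$-ideal: sets each accumulating only at $0$ can have a countable union with other accumulation points. So even if you knew $q^*\dom(p_m\setminus p_{m-1})\in I$ for every $m>N$, no ``careful bookkeeping'' would put the union into $I$. Moreover, the specific claim that $q^*\dom(p_m\setminus p_{m-1})\in I$ is unjustified: taking $a=\emptyset$ in clause~(3) gives $C(p_{m-1},p_m,\emptyset)=\emptyset$ (by clause~(2) the inequality $\eps(\supp(p_{m-1}),x)\le\diam(p_m(x))$ never holds), which tells you nothing about the full domain increment. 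And the phrase ``from how the sequence was built'' is vacuous here---the descending sequence is arbitrary.

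The fix, which is exactly the paper's argument, is to notice that the second piece contributes \emph{nothing} to $C(p_0,q,a)$. If $x$ first enters at stage $m>N$, then by clause~(2) for $p_m\le p_{m-1}$ and monotonicity of $\eps$,
\[
\diam(q(x))<\eps(\supp(p_{m-1}),x)\le\eps(\supp(p_N),x)\le\eps(\supp(p_0)\cup a,\,x),
\]
the last inequality because $\supp(p_0)\cup a\subseteq\supp(p_N)$. Hence $x\notin C(p_0,q,a)$, and therefore $C(p_0,q,a)=C(p_0,p_N,a)$. Now clause~(3) follows directly from $p_N\le p_0$ (transitivity is already established), with no need for your iterative invocation of Proposition~\ref{reflectionproposition} on the first piece either.
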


\begin{proof}
If $\langle p_i\colon i\in\gw\rangle$ is a descending sequence of conditions then $p=\bigcup_ip_i$ is their common lower bound. To see this, first note that $\supp(p)=\bigcup_i\supp(p_i)$ is a real closed subfield of $\mathbb{R}$, $\dom(p)=\supp(p)^d$, and $p$ is a function and a coloring, therefore a condition in the poset $P$. Now, fix $i\in\gw$ and work to show that $p\leq p_i$ holds. Items (1) and (2) of Definition~\ref{posetdefinition} are immediate. For item (3), if $a\subset\supp(p)$ is a finite set, then there is $j\geq i$ such that $a\subset\supp(p_j)$, and then $C(p_i, p, a)=C(p_i, p_j, a)$ and (3) follows from the same item of the definition applied to $p_j\leq p_i$.
\end{proof}

\noindent Finally, we come to the balance properties of $P$. The moral of the following propositions is that suitable total $\Gamma$-colorings exist, they serve as balanced conditions in the poset $P$, and suitable strengthenings of balance actually occur. For the sake of brevity, the propositions are proved under the assumption of the Continuum Hypothesis (CH); this does not cause any injury to the consistency result I am aiming at. A significantly longer argument proves Proposition~\ref{barproposition} and then the others in ZFC.

\begin{proposition}
\label{barproposition}
(CH) For every condition $p\in P$ there is a total $\Gamma$-coloring $\bar p$ such that $\coll(\gw, \mathbb{R})\Vdash\bar p\leq p$.
\end{proposition}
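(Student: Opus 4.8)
The plan is to build, working in $V$ under CH, a total $\Gamma$-colouring $\bar p$ extending $p$, and then to check that all three clauses of Definition~\ref{posetdefinition} survive the collapse. The combinatorial heart of the matter is a ``lower semicontinuity of $\eps$'' statement that I will call the key lemma: \emph{for every $x\in\mathbb{R}^d\setminus\supp(p)^d$ the number $\delta_0(x):=\inf\{\eps(\supp(p)\cup a,x)\colon a\in[\mathbb{R}]^{<\gw}\text{ and }x\text{ is not algebraic over }\supp(p)\cup a\}$ is strictly positive} (and $\le\eps(\supp(p),x)$, since $a=\emptyset$ is admissible by Proposition~\ref{epsproposition}(1)). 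This is where $2$-irreflexivity enters. Suppose $\eps_n:=\eps(\supp(p)\cup a_n,x)\to 0$ with each $\eps_n>0$, and pick witnessing bad pairs $b_n=\{b_n^0,b_n^1\}$: two points within roughly $\eps_n$ of $x$ with $b_n\cup\{y\}\in\Gamma$ for every $y\in D_n\setminus(\{x\}\cup b_n)$, where by Proposition~\ref{elementaryproposition} the set $D_n=D(\supp(p)\cup a_n,x)$ is a positive-dimensional irreducible variety with $x$ a nonsingular point. Since $b_n\to x$, if one can select points $y_n\in D_n\setminus(\{x\}\cup b_n)$ bounded away from $x$, then a Vietoris limit of the hyperedges $\{b_n^0,b_n^1,y_n\}\in\Gamma$ is a two-element set in the closure of $\Gamma$, contradicting $2$-irreflexivity; and the only way such $y_n$ can fail to exist is if the varieties $D_n$ Hausdorff-converge to $\{x\}$. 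Establishing the key lemma — in particular ruling out that degenerate possibility, using that $D_n$ is a positive-dimensional variety defined over a field only mildly entangled with $x$ — is the step I expect to be the main obstacle.

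Granting the key lemma, the construction is a transfinite recursion of length $\gw_1$ along an increasing continuous chain $\supp(p)=F_0\subseteq F_1\subseteq\cdots\subseteq F_\xi\subseteq\cdots$ of countable real closed subfields of $\mathbb{R}$ with $\bigcup_{\xi<\gw_1}F_\xi=\mathbb{R}$, obtained by fixing a transcendence basis $\langle t_\xi\colon\xi<\gw_1\rangle$ of $\mathbb{R}$ over $\supp(p)$ and letting $F_\xi$ be the real closure of $\supp(p)(t_\eta\colon\eta<\xi)$. At stage $\xi$ I colour the countably many points of $F_{\xi+1}^d\setminus F_\xi^d$: each such $x$ is transcendental over $F_\xi$ (a real closed field is relatively algebraically closed in $\mathbb{R}$), so $\eps(F_\xi,x)>0$ by Proposition~\ref{epsproposition}(1) and $\eps(F_\xi,x)\le\eps(\supp(p),x)$ by Proposition~\ref{epsproposition}(2). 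I let $\bar p(x)$ be a rational open ball centred at $x$ whose diameter is a positive rational smaller than both $\delta_0(x)$ and $\eps(F_\xi,x)$, chosen moreover so that $\bar p$ is injective off $\supp(p)^d$ and so that the diameters used at stage $\xi$ form a sequence converging to $0$. Clause (1) of Definition~\ref{posetdefinition} is immediate, and clause (2) holds because $\diam(\bar p(x))<\eps(\supp(p),x)$ for every $x\notin\supp(p)^d$.

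That $\bar p$ is a genuine $\Gamma$-colouring is checked edge by edge, exactly as in Proposition~\ref{densityproposition}: an edge with at least two vertices off $\supp(p)^d$ is non-monochromatic by injectivity; an edge with exactly one vertex $x$ off $\supp(p)^d$ has its other two vertices $z_0,z_1\in\supp(p)^d$, and if all three received the same ball $O=\bar p(x)$ then $z_0,z_1$ lie within $\diam(O)<\eps(\supp(p),x)$ of $x$, contradicting Proposition~\ref{epsproposition}(3) with $F=\supp(p)$ and $b=\{z_0,z_1\}$; an edge inside $\supp(p)^d$ is non-monochromatic because $p$ is a colouring.

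Finally, clause (3) for $\bar p\le p$, to be verified in $V[\coll(\gw,\mathbb{R})]$, where $\mathbb{R}^V$ is countable, $\supp(\bar p)=\mathbb{R}^V$ is a countable real closed field and $\bar p$ is a legitimate condition of $P$; by Proposition~\ref{productproposition} and the absoluteness of the Noetherian topology of algebraic sets, the sets $D(\supp(p)\cup a,x)$, the numbers $\eps(\supp(p)\cup a,x)$ and the ideal $I$ are computed the same way in $V$ and in the extension, so it suffices to check $\bar p^*C(p,\bar p,a)\in I$ for every finite $a\subseteq\mathbb{R}$ in $V$. Fix $a$. If $x\in\dom(\bar p)\setminus\dom(p)$ is transcendental over $\supp(p)\cup a$, then $\eps(\supp(p)\cup a,x)\ge\delta_0(x)>\diam(\bar p(x))$, so $x\notin C(p,\bar p,a)$; hence every $x\in C(p,\bar p,a)$ is algebraic over $\supp(p)\cup a$, i.e.\ lies in $K_a^d$ for the finitely generated real closed field $K_a=\supp(p)(a)^{rc}$. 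Let $T\subseteq\gw_1$ be the finite set of indices $\eta$ such that $t_\eta$ occurs algebraically in $a$, so that $K_a\subseteq\supp(p)(t_\eta\colon\eta\in T)^{rc}$. Using the algebraic independence of the transcendence basis, one checks that any $x\in K_a^d$ that first enters the domain at stage $\xi$ (that is, $x\in F_{\xi+1}^d\setminus F_\xi^d$) must have $\xi\in T$: some coordinate of such $x$ lies in $\supp(p)(t_\eta\colon\eta\in T)^{rc}\cap F_{\xi+1}=\supp(p)(t_\eta\colon\eta\in T,\ \eta\le\xi)^{rc}$ but not in $F_\xi$, which forces $\xi\in T$. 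Consequently $C(p,\bar p,a)$ is contained in the union of the finitely many layers $F_{\xi+1}^d\setminus F_\xi^d$ with $\xi\in T$, and since at each stage the diameters used converge to $0$, $\bar p^*C(p,\bar p,a)$ is a finite union of members of $I$ and hence lies in $I$. (A fuller argument proving Proposition~\ref{barproposition} in ZFC, replacing both the appeal to $\delta_0(x)>0$ and the use of CH, is considerably longer and is deferred.)
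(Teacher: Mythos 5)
Your ``key lemma'' --- that $\delta_0(x):=\inf\{\eps(\supp(p)\cup a,x)\colon a\in[\mathbb{R}]^{<\gw},\ x\text{ not algebraic over }\supp(p)\cup a\}$ is strictly positive --- is false, and the ``degenerate possibility'' you flagged (the varieties $D_n$ Hausdorff-converging to $\{x\}$) really does occur. Take $\Gamma$ to be the hypergraph of equilateral triangles on $\mathbb{R}^3$, $\supp(p)=\mathbb{Q}^{rc}$, and $x=(t_0,t_1,t_2)$ with $t_0,t_1,t_2$ algebraically independent over $\mathbb{Q}$. For each $n$ choose $\gq_n$ with $\cos\gq_n$ transcendental over $\mathbb{Q}(t_0,t_1,t_2)$ and set $a_n=\{t_0+\tfrac1n\cos\gq_n,\ t_1+\tfrac1n\sin\gq_n,\ t_2\}$. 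Then $x$ is still transcendental over $\supp(p)\cup a_n$, while $D(\supp(p)\cup a_n,x)$ is the circle $C_n$ of radius $1/n$ through $x$ in the plane $z=t_2$ centered at $c_n=(t_0+\tfrac1n\cos\gq_n,\,t_1+\tfrac1n\sin\gq_n,\,t_2)$. For that circle there is a bad pair $b$: the two points on the axis through $c_n$ at distance $\tfrac{1}{n\sqrt3}$ from $c_n$ make $\{b_0,b_1,y\}$ equilateral for \emph{every} $y\in C_n$, and this pair is at distance $\tfrac{2}{n\sqrt3}$ from $x$. Hence $\eps(\supp(p)\cup a_n,x)=\tfrac{2}{n\sqrt3}\to 0$ and $\delta_0(x)=0$. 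Your diameter constraint ``$<\delta_0(x)$'' is then unsatisfiable, and without it the containment $C(p,\bar p,a)\subseteq K_a^d$ fails: for $a=a_n$ the set $C(p,\bar p,a)$ acquires transcendental points spread across unboundedly many layers $F_{\xi+1}^d\setminus F_\xi^d$, and the ideal-$I$ conclusion collapses.

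The paper's proof avoids the issue and is much shorter. Under CH, enumerate $\mathbb{R}=\{r_\ga\colon\ga\in\gw_1\}$ and build a $\leq$-decreasing transfinite chain of \emph{conditions} $\langle p_\ga\colon\ga\in\gw_1\rangle$ with $p_0=p$ and $r_\ga\in\supp(p_{\ga+1})$, using Proposition~\ref{densityproposition} at successors and Proposition~\ref{sigmaclosureproposition} at limits; set $\bar p=\bigcup_\ga p_\ga$. Clause (3) of Definition~\ref{posetdefinition} is never verified directly between $\bar p$ and $p$; it is secured locally at every step of the chain and then transferred to $\bar p$ by running the argument of Proposition~\ref{sigmaclosureproposition} inside the $\coll(\gw,\mathbb{R})$-extension, where $\gw_1^V$ has been collapsed and the chain has become a genuine $\gw$-sequence. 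Concretely, for a finite $a\subset\supp(\bar p)$ one finds $j$ with $a\subset\supp(p_j)$; clause (2) of $\bar p\leq p_j$ gives $\diam(\bar p(x))<\eps(\supp(p_j),x)\leq\eps(\supp(p)\cup a,x)$ for every $x$ added after stage $j$, so $C(p,\bar p,a)=C(p,p_j,a)$, whose $p_j$-image in $I$ was already guaranteed by $p_j\leq p$. Your construction is in essence one realization of such a chain, but your verification tries to jump straight to the union and lands on the unavailable lemma; replacing the direct verification by the chain-plus-$\gs$-closure argument is exactly what repairs it.
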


\noindent Note that the total coloring $\bar p$ becomes a condition in $P$ in the $\coll(\gw, \mathbb{R})$-extension. The proposition says in particular that every countable partial coloring can be extended to a total one; however, it says more than that in view of Definition~\ref{posetdefinition}(3).

\begin{proof}
Use the Continuum Hypothesis assumption to find an enumeration $\langle r_\ga\colon\ga\in\gw_1\rangle$ of all reals. By transfinite recursion on the countable ordinal $\ga$ build conditions $p_\ga\in P$ so that $p_0=p$, $\ga\in\gb$ implies $p_\gb\leq p_\ga$, and $r_\ga\in\supp(p_{\ga+1})$. This is easy to do using Proposition~\ref{densityproposition} in successor stages, and Proposition~\ref{sigmaclosureproposition} in limit stages. Finally, let $\bar p=\bigcup_\ga p_\ga$. This is a total $\Gamma$-coloring. The proof of Proposition~\ref{sigmaclosureproposition} in the $\coll(\gw, \mathbb{R})$-extension shows that  $\coll(\gw, \mathbb{R})\Vdash\bar p$ is a common lower bound of all $p_\ga$ for $\ga\in\gw_1^V$ as desired.
\end{proof}

\begin{proposition}
\label{balanceproposition}
Let $\bar p\in V$ be a total coloring. Let $V[G_i]$ for $i\in k$ be triple-wise mutually generic extensions.  For each $i\in k$ let $p_i\leq \bar p$ be conditions in the model $V[G_i]$. Then $p_i$ for $i\in k$ have a common lower bound in the coloring poset $P$.
\end{proposition}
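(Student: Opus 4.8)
The plan is to take the common lower bound to be simply $q=\bar p \cup \bigcup_{i\in k} p_i$, and then to verify that $q$ is a condition in $P$ and that $q\leq p_i$ for each $i\in k$. The first order of business is to check that $q$ is well-defined and a condition. Since the $p_i$ all extend $\bar p$ and a total coloring has domain all of $\mathbb{R}^d$, there is no conflict of domains beyond what $\bar p$ already decides: $\dom(q)=\mathbb{R}^d$, and $\supp(q)$ should be taken to be the real closed field generated by $\mathbb{R}\cap V$ together with $\bigcup_i \supp(p_i)$ — but in fact since $\bar p$ is total, the natural move is to regard $q$ as a condition with support all of $\mathbb{R}$ in an ambient model, or more carefully: the point of the balance formalism is that $q$ lies in a further mutually generic extension, so I would state the conclusion as: $q$ is a condition in $P$ as computed in $V[G_i:i\in k]$, with support the real closed field generated by all the $\supp(p_i)$, whose domain is that field to the $d$th power, agreeing with $\bigcup_i p_i$ there and taking arbitrary admissible values elsewhere; alternatively, since the statement only asks for a common lower bound, I take $\supp(q)=\bigcup_i\supp(p_i)$ (already real closed as a directed union, assuming each contains $\mathbb{R}\cap V$) and $q=\bigcup_i p_i$, noting the values on $\dom(q)\setminus\bigcup_i\dom(p_i)$ come for free from $\bar p$. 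That $q$ is a coloring follows because any hyperedge $e\in\Gamma$ contained in $\dom(q)$ is already contained in $\dom(\bar p)=\mathbb{R}^d$ and $\bar p$ is a $\Gamma$-coloring; the values $q$ assigns only shrink the balls $\bar p$ assigns, so monochromaticity can only be destroyed, never created — wait, that is the wrong direction, so instead I argue: $e$ is not monochromatic under $\bar p$, and each $q(x)\subseteq\bar p(x)$, hence the $q$-values on $e$ are still not all equal. Good.

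Next I verify $q\leq p_i$ for a fixed $i\in k$. Definition~\ref{posetdefinition}(1) is immediate from $p_i\subseteq q$. For Definition~\ref{posetdefinition}(2), take $x\in\dom(q)\setminus\dom(p_i)$; I must show $\diam(q(x))<\eps(\supp(p_i),x)$. If $x\in\dom(p_j)$ for some $j\neq i$ then $q(x)=p_j(x)$ and $\diam(p_j(x))<\eps(\supp(\bar p),x)$ by Definition~\ref{posetdefinition}(2) for $p_j\leq\bar p$; but $\supp(\bar p)\subseteq\supp(p_i)$, so by Proposition~\ref{epsproposition}(2) $\eps(\supp(p_i),x)\leq\eps(\supp(\bar p),x)$ — again the wrong direction. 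This is the crux: the $\eps$ we must beat, $\eps(\supp(p_i),x)$, is \emph{smaller} than the one $p_j$ was designed to beat. The resolution is Proposition~\ref{productproposition}: because $V[G_i]$ and $V[G_j]$ are mutually generic, $D((\mathbb{R}\cap V[G_i])\cup a,x)=D((\mathbb{R}\cap V)\cup a,x)$ for $x\in V[G_j]$ and finite $a\in V[G_j]$, which forces the relevant $\eps$ values computed over $\supp(p_i)$ and over $\supp(\bar p)$ (or over $V$) to agree for points $x\in\dom(p_j)$. So for such $x$, $\eps(\supp(p_i),x)=\eps(\supp(\bar p),x)>\diam(q(x))$. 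If instead $x$ lies in $\dom(q)$ but in none of the $\dom(p_j)$, the value $q(x)=\bar p(x)$ needs to satisfy the bound, which is arranged by choosing those values small enough — or, more honestly, this case forces the definition of $q$ on the leftover points to use $\eps(\supp(p_i),x)$ for \emph{all} $i$ simultaneously, i.e.\ to be below $\min_i\eps(\supp(p_i),x)$; again by Proposition~\ref{productproposition} these are all equal to the value over $V$, which is positive whenever $x$ is not algebraic over $\mathbb{R}\cap V$ (Proposition~\ref{epsproposition}(1)), and when $x$ \emph{is} algebraic over $\mathbb{R}\cap V$ it already lies in $\bigcup_j\dom(p_j)$ since each $\supp(p_j)\supseteq\mathbb{R}\cap V$ is real closed — no wait, algebraic over the field need not be in the field. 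I will need to handle that subcase by noting $\eps=0$ there is impossible to beat, so such $x$ must genuinely be pre-assigned by some $p_j$; since the $p_j$ cover $(\mathbb{R}\cap V)^d$-algebraic points this should hold, and I will spell it out.

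Finally, the heart of the matter, Definition~\ref{posetdefinition}(3): for a finite $a\subset\supp(q)=\bigcup_j\supp(p_j)$, I must show $q^*C(p_i,q,a)\in I$, i.e.\ the set of diameters $\{\diam(q(x)):x\in\dom(q\setminus p_i),\ \eps(\supp(p_i)\cup a,x)\leq\diam(q(x))\}$ has zero as its only accumulation point. Writing $a=a_i\cup(a\setminus a_i)$ where $a_i=a\cap\supp(p_i)$ is small, I decompose $C(p_i,q,a)$ according to which $\dom(p_j)$ the point $x$ falls into (plus the leftover set), getting $C(p_i,q,a)\subseteq \bigcup_{j\neq i} C(\bar p,p_j,b_j)\cup(\text{leftover})$ for suitable finite $b_j\subset\supp(p_j)$ obtained from Proposition~\ref{reflectionproposition} so that the $\eps$-comparison goes through — exactly as in the transitivity proof — and then invoking mutual genericity (Proposition~\ref{productproposition}) to replace $\eps(\supp(p_i)\cup a,x)$ by the correct quantity over $V$. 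Each $p_j^*C(\bar p,p_j,b_j)\in I$ by Definition~\ref{posetdefinition}(3) for $p_j\leq\bar p$, the leftover contributes a sequence of diameters chosen to converge to $0$, and $I$ is closed under finite unions (its sets being those whose only accumulation point is $0$), so the union is in $I$. The main obstacle, and the place the argument really lives, is precisely this interplay: the $\eps$-values that condition $p_j$ (living in $V[G_j]$) was built to respect are computed over $\supp(p_j)$, whereas the test in $q\leq p_i$ is over $\supp(p_i)$, and these fields are incomparable; the only tool that lets one pass between them is the product lemma, which says that \emph{reals from other mutually generic extensions are, from the point of view of the $D(\cdot,\cdot)$ and hence $\eps(\cdot,\cdot)$ operators, invisible — they behave exactly like $V$-reals}. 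Getting the bookkeeping of finite parameter sets $a_i,b_j$ right across all $\binom{k}{2}$ pairs, using triple-wise (rather than merely pairwise) mutual genericity where three extensions interact, is the technical heart; everything else is a routine adaptation of the transitivity and density proofs.
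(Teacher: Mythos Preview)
Your proposal has a genuine gap at the heart of the coloring verification, and a related confusion about what $\bar p$ can do for you.

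First, $\bar p$ does \emph{not} provide values on the leftover set. Since $\bar p$ is total in $V$, its domain is exactly $(\mathbb{R}\cap V)^d$; every point of the leftover $G=F^d\setminus\bigcup_i\dom(p_i)$ lies outside $(\mathbb{R}\cap V)^d$ (because $(\mathbb{R}\cap V)^d\subseteq\dom(p_i)$ for each $i$), so $\bar p$ is silent there. Your argument that $q$ is a coloring because ``each $q(x)\subseteq\bar p(x)$'' therefore collapses: for $x\notin(\mathbb{R}\cap V)^d$ there is no $\bar p(x)$ to compare with. (Also, $\bigcup_i\supp(p_i)$ is not a directed union---these fields live in pairwise incomparable extensions---so it is not real closed; you must take the real closure $F$ as in your first formulation.)

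The actual coloring argument requires real work that you have not supplied. Already for $\bigcup_i p_i$, a hyperedge $\{x_0,x_1,x_2\}$ with, say, $x_1\in\dom(p_i\setminus\bar p)$ and $x_0,x_2\in\dom(p_j)$ for distinct $i,j$ is not handled by ``$\bar p$ was a coloring''; one must use the $2$-irreflexivity of $\Gamma$ together with the definition of $\eps$ and Proposition~\ref{productproposition} to see that $\{x_0,x_2\}\not\subset p_i(x_1)$. More seriously, once you extend to $F^d$, there is a configuration you do not even mention: a hyperedge with one vertex $x$ in the leftover $G$ and the other two in $\dom(p_i\setminus\bar p)$ and $\dom(p_j\setminus\bar p)$ for distinct $i,j$. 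Here neither $p_i$ nor $p_j$ alone sees the hyperedge, and the diameter bound $\diam(q(x))<\eps(\supp(p_i),x)$ does not help (the dangerous ball is $p_i(y)$, not $q(x)$). The paper handles this by associating to each such $x$ and each pair $i,j$ a set $b_{ij}(x)\in I$ of ``bad'' diameters (built via a Hilbert-basis argument and Definition~\ref{posetdefinition}(3) for $p_i\leq\bar p$), and then choosing the diameters of $q$ on $G$ from an infinite set $b$ converging to $0$ and almost disjoint from every $b_{ij}(x)$. This is exactly the step your outline is missing; without it there is no reason the extension to $G$ avoids monochromatic hyperedges.

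Your treatment of items (2) and (3) of the ordering, using Proposition~\ref{productproposition} to transport $\eps$-values across mutually generic extensions and Proposition~\ref{reflectionproposition} to reflect the finite parameter set, is on the right track and matches the paper's approach for that part.
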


\noindent It is necessary to parse the proposition correctly. The whole proposition (and its proof as well) takes place in an ambient  forcing extension, whose choice is inconsequential, and it is therefore not specified. The assumption on the generic extensions says that any pair or any triple of them is mutually generic. In fact, the proof shows immediately that it is enough to assume that pairs and triples of them are $n$-Noetherian as in \cite{z:distance}--this is irrelevant for the purposes of this paper though. The coloring poset $P$ is re-interpreted in every model in question. The coloring $p$ is total in $V$, therefore uncountable in $V$. However, if a generic extension collapses the cardinality of $\mathbb{R}\cap V$ to $\aleph_0$, $p$ becomes a condition in the poset $P$ as interpreted in that generic extension. 

\begin{proof}
The first observation must be the following:

\begin{claim}
\label{coloringclaim}
$\bigcup_ip_i$ is a function and a coloring.
\end{claim}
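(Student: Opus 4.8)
The plan is to show that $f=\bigcup_i p_i$ is well-defined as a function and that no hyperedge in its domain is monochromatic. For well-definedness, the potential conflict is between $p_i$ and $p_j$ for $i\neq j$: both extend the total coloring $\bar p\in V$, so they agree on $\bar p$'s domain, and I must check that if $x\in\dom(p_i\setminus\bar p)\cap\dom(p_j\setminus\bar p)$ then $p_i(x)=p_j(x)$. Since $\supp(p_i)\in V[G_i]$ and $\supp(p_j)\in V[G_j]$ are real closed subfields containing $\mathbb{R}\cap V$, such an $x$ lies in $(\mathbb{R}^d\cap V[G_i])\cap(\mathbb{R}^d\cap V[G_j])=\mathbb{R}^d\cap V$ by mutual genericity of $V[G_i],V[G_j]$; but then $x\in\dom(\bar p)$, so there is no conflict. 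Hence $f$ is a function with domain $\bar p$'s domain union the $\supp(p_i)^d$'s.

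Next I would verify $f$ is a coloring, i.e.\ for each hyperedge $e\in\Gamma$ with $e\subseteq\dom(f)$, $e$ is not monochromatic. Split on how the three vertices of $e$ distribute among the models. If all of $e$ lies in $\dom(\bar p)$, then $\bar p$ being a coloring (total colorings are colorings by hypothesis) does it. If $e$ has two or three vertices outside $\dom(\bar p)$ but all in a single $\dom(p_i)$, then $p_i$ being a coloring does it. The genuinely new configurations are: (a) exactly one vertex outside $\dom(\bar p)$, say $y\in\dom(p_i\setminus\bar p)$ with the other two in $\dom(\bar p)\subseteq\dom(p_i)$ — here $p_i$ being a coloring suffices; (b) two vertices outside $\dom(\bar p)$ lying in different models, say $y\in\dom(p_i\setminus\bar p)$, $z\in\dom(p_j\setminus\bar p)$, $i\neq j$, with the third vertex $w$ either in $\dom(\bar p)$ or in yet a third model $\dom(p_l\setminus\bar p)$; (c) all three vertices outside $\dom(\bar p)$, pairwise in distinct models.

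The hard case is (b)/(c), where no single $p_i$ sees the whole edge. The idea is to use Proposition~\ref{epsproposition}(3) together with the $\eps$-control built into the ordering $p_i\leq\bar p$ (Definition~\ref{posetdefinition}(2)): since $y\in\dom(p_i\setminus\bar p)$, the color $p_i(y)$ has diameter $<\eps(\supp(\bar p),y)=\eps(\mathbb{R}\cap V,y)$, and similarly for $z$; moreover $y\notin(\mathbb{R}\cap V)^d$ because $y\notin V$. If $e$ were monochromatic under $f$, all three vertices would lie in a common color ball $B$ of diameter $<\eps(\mathbb{R}\cap V,y)$, so in particular $w,z$ are points of distance $<\eps(\mathbb{R}\cap V,y)$ from $y$. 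To invoke Proposition~\ref{epsproposition}(3) with $F=\mathbb{R}\cap V$ I need $b:=e\setminus\{y\}\subseteq(\mathbb{R}\cap V)^d$ and $b\cup\{y\}=e\in\Gamma$; the latter holds, and the former is exactly where mutual genericity is used again — but here it fails directly since $z\notin V$. The resolution is to not put $F=\mathbb{R}\cap V$ but rather to observe that from the point of view of $V[G_j]$ (which contains both $z$ and, by mutual genericity with the remaining coordinates, a real closed field $F'\supseteq\supp(p_j)$ containing $y$ and $w$ as "generic-over-$V[G_j]$" or "already-in-$V[G_j]$" points), one of the vertices plays the role of the $x$ in Proposition~\ref{epsproposition}(3). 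Concretely: work in the model containing the "last" vertex to be added; the other two vertices are then in a real closed subfield of that model by a pairwise-mutual-genericity argument (each pair $V[G_i],V[G_j]$ intersects in $V$), and Proposition~\ref{epsproposition}(3) applied in that model, with the appropriate $\eps$-bound surviving because $\eps(\supp(p_j),-)\le\eps(\mathbb{R}\cap V,-)$ by Proposition~\ref{epsproposition}(2), yields that $e$ is not monochromatic. I expect the bookkeeping of which model to work in, and checking that the relevant $\eps$ value dominates the color diameters of all three vertices simultaneously, to be the main obstacle; the use of Proposition~\ref{productproposition} / the intersection-equals-$V$ property of mutually generic extensions is the key technical input.
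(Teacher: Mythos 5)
Your decomposition into cases and your treatment of the function part are correct and match the paper. You also correctly identify the crux: Proposition~\ref{epsproposition}(3) cannot be applied with $F=\mathbb{R}\cap V$ because the other two vertices need not lie in $V$. However, your proposed repair does not work as stated, for two reasons. First, you say the other two vertices ``are then in a real closed subfield of that model by a pairwise-mutual-genericity argument.'' If $x_1\in V[G_i]\setminus V$ and $x_0,x_2\in V[G_j]$, then the natural subfield is $F=\mathbb{R}\cap V[G_j]$, which contains $x_0,x_2$ and excludes $x_1$; but if the third vertex lives in yet another $V[G_l]$, no single model contains both of the ``other two'' vertices --- you then need the mutual genericity of $V[G_i]$ with the \emph{pair} $V[G_j],V[G_l]$, i.e.\ the triple-wise hypothesis, and this is not what ``each pair intersects in $V$'' gives you. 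Second, and more importantly, the $\eps$-bound you cite goes the wrong way: Proposition~\ref{epsproposition}(2) gives $\eps(\supp(p_j),x_1)\le\eps(\mathbb{R}\cap V,x_1)$, but Definition~\ref{posetdefinition}(2) only controls $\diam(p_i(x_1))$ by $\eps(\mathbb{R}\cap V,x_1)$, so you need the \emph{larger} quantity, not the smaller. What you actually need is the \emph{equality} $D(\mathbb{R}\cap V,x_1)=D(\mathbb{R}\cap V[G_j],x_1)$ coming from Proposition~\ref{productproposition}, which (since $\eps(A,x)$ depends only on $D(A,x)$) gives $\eps(\mathbb{R}\cap V,x_1)=\eps(\mathbb{R}\cap V[G_j],x_1)$ and closes the gap. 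Without this, the argument collapses.

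For comparison, the paper sidesteps Proposition~\ref{epsproposition}(3) entirely. It isolates the vertex $x_1$ alone in its model, observes that $E=\{y\colon\{x_0,x_2,y\}\in\Gamma\}$ is an algebraic set with parameters in the model $V[G_j]$ (resp.\ $V[G_j,G_l]$ in the three-model case, using triple-wise mutual genericity) containing $x_1$, so $D(\mathbb{R}\cap V[G_j],x_1)\subseteq E$, and then by Proposition~\ref{productproposition} also $D(\mathbb{R}\cap V,x_1)\subseteq E$. Unwinding Definition~\ref{epsdefinition} directly, the set $b=\{x_0,x_2\}$ is then a witness to the defining property, so it cannot consist of points within $\eps(\mathbb{R}\cap V,x_1)$ of $x_1$; but Definition~\ref{posetdefinition}(2) applied to $p_i\leq\bar p$ bounds $\diam(p_i(x_1))$ by exactly that $\eps$, so $\{x_0,x_2\}\not\subseteq p_i(x_1)$ and the triangle is not monochromatic. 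The moral: the right lever is the Noetherian-absoluteness of $D(\cdot,x_1)$ across mutually generic models, not the real-closed-subfield statement (3), and the $\eps$-comparison you need is an equality coming from Proposition~\ref{productproposition}, not the monotonicity inequality of Proposition~\ref{epsproposition}(2).
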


\begin{proof}
Let $i, j\in k$ be distinct indices. Since the generic extensions $V[G_i]$ and $V[G_j]$ are mutually generic, $V[G_i]\cap V[G_j]=V$ holds. Thus, $\dom(p_i)\cap\dom(p_j)\subset V$. However, $p_i\restriction V=p_j\restriction V=p$ holds, so it must be the case that $\bigcup_{i\in k}p_i$ is a function. To see that it is a coloring, let $e=\{x_0, x_1, x_2\}$ be a hyperedge in $\dom(\bigcup_ip_i)$. The proof that it is not monochromatic considers several configurations:

\noindent\textbf{Case 1.} All three vertices are in the same extension $V[G_i]$. In this case, the hyperedge $e$ is not monochromatic because $p_i$ is a coloring.

\noindent\textbf{Case 2.} Not Case 1 and all three vertices can be found in the union of two of the forcing extensions. Then, there must be a single vertex, say $x_1$ which is in $V[G_i]\setminus V$, and the two remaining vertices are in $V[G_j]$ for some distinct indices $i, j\in k$. Let $D=\{y\in\mathbb{R}^n\colon \{x_0, x_2, y\}\in\Gamma\}$. This is a $V[G_j]$-algebraic set containing $x_1$. By the mutual genericity assumption on $V[G_i]$ and $V[G_j]$ and Proposition~\ref{productproposition}, $D(\mathbb{R}\cap V, x_1)\subseteq D$ holds; in other words, for every point $y\in D(\mathbb{R}\cap V, x_1)\setminus \{x_0, x_1, x_2\}$, $\{x_0, x_2, y\}\in\Gamma$ holds. By Definition~\ref{epsdefinition}, at least one of the points $x_0, x_2$ must be farther than $\eps(\mathbb{R}\cap V, x_1)$ from $x_1$. By Definition~\ref{posetdefinition}(2) applied to $p_1\leq p$, $\{x_0, x_2\}\not\subset p_1(x_1)$ and $e$ is not monochromatic.

\noindent\textbf{Case 3.} Not Case 1 and 2. This is handled in the same way as Case 2, except this time mutual genericity in triples is used. This proves the claim.
\end{proof}

\noindent It is important to understand that the work does not end here. While the function $\bigcup_{i\in k}p_i$ is a coloring, its domain is not in the required form for a condition in the coloring poset; it is necessary to extend its domain. Towards this end, I need to analyze the situation closer.

\begin{claim}
For every point $x\in\mathbb{R}^d\setminus\bigcup _i\dom(p_i)$  and all distinct indices $i, j\in k$ there is a set $b_{ij}(x)\in I$ such that whenever $y\in\dom(p_i\setminus p)$ and $z\in\dom(p_j\setminus p)$ such that $\{x, y, z\}\in\Gamma$, then either $\{x, z\}\not\subset p_i(y)$ or the diameter of $p_i(y)$ belongs to the set $b_{ij}(x)$.
\end{claim}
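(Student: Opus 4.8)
I want to bound, uniformly over pairs $y\in\dom(p_i\setminus p)$, $z\in\dom(p_j\setminus p)$ forming a hyperedge $\{x,y,z\}\in\Gamma$ with $\{x,z\}\subset p_i(y)$, the diameter of $p_i(y)$; the point is that only small diameters can occur, and moreover the set of such small diameters is in the ideal $I$ (accumulates only at zero). The key structural fact to exploit is that $x$ lies outside all $\dom(p_i)$, hence—since $\dom(p_i)=\supp(p_i)^d$ and $\supp(p_i)\subset\mathbb{R}\cap V[G_i]$—$x$ is not in any $V[G_i]$, so by mutual genericity some coordinate of $x$ is generic in a strong enough sense. The guiding idea is that $D(\mathbb{R}\cap V,\, x)$ together with the pair condition forces the diameter $\diam(p_i(y))$ to be compared against $\eps(\supp(p)\cup a,\,y)$ for a suitable finite parameter set $a$, and then Definition~\ref{posetdefinition}(3) for $p_i\leq p$ (applied in $V[G_i]$) delivers membership in $I$.

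**Key steps, in order.** First, fix $x$ and distinct $i,j\in k$. Working in $V[G_j]$, for $z\in\dom(p_j\setminus p)$ consider the $V[G_j]$-algebraic set of $w$ with $\{x,w,z\}\in\Gamma$ (here $x$'s coordinates are extra parameters); I will need to invoke Proposition~\ref{productproposition} and the mutual genericity of $V[G_i]$ over $V[G_j]$ to conclude that, for $y\in V[G_i]$, the set $D((\mathbb{R}\cap V)\cup\{\text{coords of }x,z\},\,y)$ is contained in this algebraic set. Second, from $\{x,z\}\subset p_i(y)$ and the definition of $\eps$ applied with the parameter set $a$ consisting of the coordinates of $x$ together with $z$ (a finite subset of $\mathbb{R}$, though $x\notin V[G_i]$, so I should be careful that $a\not\subset\supp(p_i)$ in general—this forces me to instead run the $\eps$-analysis relative to $\mathbb{R}\cap V[G_j]$ or to absorb $x,z$ into a larger real closed field): I conclude that $\diam(p_i(y))\geq\eps(A,y)$ for that parameter set $A$, because otherwise the ball $p_i(y)$ (minus $y$ and any companion points) would witness the failure of the $\eps$ inequality via the containment from Step~1, contradicting the choice of $\eps(A,y)$ as the largest such number. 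Third, I want to replace the "bad" parameter set $A$ (which involves $x$, not in $V[G_i]$) by a finite set in $\supp(p_i)$: here is where Proposition~\ref{reflectionproposition} enters—taking $F_0=\mathbb{R}\cap V$ (or its real closure), $F_1=\mathbb{R}\cap V[G_i]$, and $a$ the coordinates of $x$ and $z$, I get a finite $b\subset\supp(p_i)$ with $D(\supp(p)\cup b,\,y)\subseteq D(\supp(p)\cup a,\,y)$, hence $\eps(\supp(p)\cup b,\,y)\leq\eps(\supp(p)\cup a,\,y)\leq\diam(p_i(y))$ by Proposition~\ref{epsproposition}(2). Fourth, this says precisely that $y\in C(p,p_i,b)$ in the notation of Definition~\ref{posetdefinition}(3); therefore $\diam(p_i(y))\in p_i^*C(p,p_i,b)\in I$, and I set $b_{ij}(x):=p_i^*C(p,p_i,b)$. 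Since $b$ depends only on $x$ (via its coordinates) and on the fixed data, not on $y$, this is a single element of $I$ working for all $y,z$ as required.

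**The main obstacle.** The delicate point is keeping track of which real closed field and which parameter set the various $D$'s and $\eps$'s are computed over, because $x$ does not live in any $V[G_i]$, so the finite set $a=\{$coords of $x\}\cup\{$coords of $z\}$ is not a subset of $\supp(p_i)$ and Definition~\ref{posetdefinition}(3) cannot be applied to it directly. The resolution is the two-model shuffle: use the geometric input (Propositions~\ref{productproposition}, \ref{reflectionproposition}) to push the relevant algebraic containment down to a statement about $D(\supp(p)\cup b,\,y)$ with $b\subset\supp(p_i)$, at which point the combinatorial clause (3) of the ordering applies in $V[G_i]$. A secondary subtlety is justifying the inequality $\diam(p_i(y))\geq\eps(A,y)$ from the fact that $\{x,z\}\subset p_i(y)$: I must arrange that the ball $p_i(y)$ (or rather the set $\{x,z\}$ enlarged by nothing) genuinely plays the role of the set $b$ in Definition~\ref{epsdefinition}, i.e. that $\{x,z\}$ consists of points closer than $\diam(p_i(y))$ to $y$—this follows because $y\in p_i(y)$ and $x,z\in p_i(y)$ and $p_i(y)$ has diameter $\diam(p_i(y))$, so all pairwise distances, in particular to $y$, are strictly below it—together with the containment $D(A,y)\subseteq\{w:\{x,w,z\}\in\Gamma\}$ from Step~1 giving the edge condition for every $w\in D(A,y)$ not among $\{x,y,z\}$. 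Everything else is bookkeeping of the kind already rehearsed in the transitivity proof and in Case~2 of Claim~\ref{coloringclaim}.
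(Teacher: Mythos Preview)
Your plan has a genuine gap at Step~3. You apply Proposition~\ref{reflectionproposition} with the parameter set $a$ consisting of the coordinates of $x$ \emph{and} of $z$, and then assert that the resulting $b\subset\supp(p_i)$ ``depends only on $x$ \dots\ not on $y$''. But $b$ certainly depends on $a$, and your $a$ contains the coordinates of $z$, which ranges over all of $\dom(p_j\setminus\bar p)$. So your $b_{ij}(x)$ is really $b_{ij}(x,z)$, and you would need to take a countable union over $z$; the ideal $I$ is not $\sigma$-complete (for instance $\{1/k:k\in\gw\}\cup\{1+1/n\}\in I$ for each $n$, but the union accumulates at $1$), so this does not give a set in $I$. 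Dropping $z$ from $a$ does not rescue the argument either: you would then need $D((\mathbb{R}\cap V)\cup\{\text{coords of }x\},y)\subseteq\{w:\{x,w,z\}\in\Gamma\}$, and to absorb $z$ into the base field via Proposition~\ref{productproposition} you would need the remaining parameter set (coords of $x$) to lie in $V[G_i]$ along with $y$---but $x$ is outside every $V[G_i]$ by hypothesis, so the proposition does not apply. There is also a minor slip: with $F_1=\mathbb{R}\cap V[G_i]$ the proposition only gives $b\subset\mathbb{R}\cap V[G_i]$, not $b\subset\supp(p_i)$; you should take $F_1=\supp(p_i)$ (which is fine since $y\in\supp(p_i)^d$).

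The missing idea, which the paper supplies, is to trade the single point $x$ (which lives in no $V[G_i]$) for an algebraic \emph{locus} that is definable from finitely many parameters in $\supp(p_i)\cup\supp(p_j)$. Concretely: for each relevant pair $(y,z)$ set $A_{y,z}=\{w:\{y,z,w\}\in\Gamma\}\cup\{y,z\}$; every such set contains $x$, and by the Hilbert basis theorem the intersection $A=\bigcap A_{y,z}$ stabilizes after finitely many pairs $c'$. The coordinates of those finitely many $y$'s give a single finite $a_i\subset\supp(p_i)$, and $A$ is now definable from $a_i$ together with parameters in $\supp(p_j)$. For an arbitrary pair $(y,z)$ one then shows $D(\supp(\bar p)\cup a_i,y)\subseteq\{w:\forall v\in A\ \{w,z,v\}\in\Gamma\}$ via Proposition~\ref{productproposition} (now the extra parameters lie in $V[G_j]$, and $a_i\subset V[G_i]$ along with $y$, so the proposition applies), and since $x\in A$ this yields $\{x,z,w\}\in\Gamma$ for every $w$ in that $D$-set. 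From there your Steps~2 and~4 go through with the single, $z$-independent parameter set $a_i$, and $b_{ij}(x)=p_i^{*}C(\bar p,p_i,a_i)\in I$.
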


\begin{proof}
Fix the point $x$ and distinct indices $i, j\in k$. Let $c$ be the set of all pairs $\langle y, z\rangle$ such that $y\in \dom(p_i\setminus p)$ such that there is a point $z\in \dom(p_j\setminus p)$ such that $\{x, y, z\}\in\Gamma$ and $p_i(y)$ contains both $x$ and $z$.

For each pair $\langle y, z\rangle\in c$ let $A_{y,z}=\{w\in\mathbb{R}^d\colon \{y, z, w\}\in\Gamma\}\cup \{y, z\}$. This is an algebraic set. By the Hilbert basis theorem, there is a finite set $c'\subset c$ such that the intersections $\bigcap\{A_{y,z}\colon \langle y, z\rangle\in c\}$ and $\bigcap\{A_{y, z}\colon \langle y, z\rangle\in c'\}$ are equal; call their common value $A$. Let $a_i\subset\supp(p_i)$ be a finite set such that for every point $\langle y, z\rangle\in c'$, all coordinates of the point $y$ are in the set $a_i$; the set $a_j$ is defined similarly. Let $b_{ij}$ be the set of all diameters of values $p_i(y)$ such that $y\in\dom(p_i\setminus p)$ is a point such that $\eps(\supp(p_i)\cup a_i, y)$ is smaller than the diameter of $p_i(y)$. By Definition~\ref{posetdefinition}(3), this is a set in the ideal $I$. It will be enough to show that the set $b_{ij}$ works.

To see this, for every pair $\langle y, z\rangle\in C$, the set $D_{y,z}=\{w\in\mathbb{R}^n\colon\forall v\in c\ \{w, v, z\}\in\Gamma\}$ is an algebraic set which is defined as a semi-algebraic set in parameters in $a_i\cup a_j\cup z$. By Proposition~\ref{productproposition}, $D(\supp(p)\cup a_i, y)\subseteq D_{y,z}$. It follows that either the diameter of $p_i(y)$ is smaller than $\eps(\supp(p_i\cup a_i, y))$, in which case $\{x, z\}\not\subseteq p_i(y)$, or else the diameter of the value $p_i(y)$ belongs to the set $b_{ij}$ by the choice of $b_{ij}$.
\end{proof}

\noindent Now, let $F$ be any countable real closed subfield of $\mathbb{R}$ such that $\bigcup_{i\in k}\supp(p_i)\subset F$. Let $G=F^d\setminus\bigcup_{i\in k}\dom(p_i)$. Let $b\subset\mathbb{Q}^+$ be an infinite sequence converging to zero which has finite intersection with every set $b_{ij}(x)$ for distinct indices $i, j\in k$ and points $x\in G$. It is not difficult to find a map $q$ on $F^d$ extending $\bigcup_{i\in k}p_i$, such that $q\restriction G$ is an injection  and such that for every $x\in F^n\setminus\bigcup_{i\in k}\dom(p_i)$, and all indices $i, j\in k$, 
 $\diam(q(x))<\eps(\supp(p_i), x)$ and $\diam(q(x))\in b\setminus b_{ij}$ both hold. This is easy to do as the set $G$ countable, and for each point $x\in G$ the last two demands still leave infinitely many options for the value $q(x)$. It will be enough to show that $q$ is the desired common lower bound of the conditions $p_i$ for $i\in k$.

First of all, $q$ is a function and $\bigcup_ip_i\subset q$ by the definitions. To see that $q$ is a coloring, let $e=\{x_0, x_1, x_2\}$ be a $\Gamma$-hyperedge in $\dom(q)$; I must show that it is not monochromatic. There are several possible configurations. 

\noindent\textbf{Case 1.} $e\subset\dom(\bigcup_ip_i)$. This configuration is resolved in Claim~\ref{coloringclaim}.

\noindent\textbf{Case 2.} If $e$ contains exactly two vertices in $\dom(\bigcup_ip_i)$ and there is $i\in k$ such that these two vertices, say $x_1, x_2$ , both belong to $\dom(p_i)$. Then the hyperedge is not monochromatic by Proposition~\ref{epsproposition}(3) and $\diam(q(x))<\eps(\supp(p_i), x)$.

\noindent\textbf{Case 3.} If $e$ contains exactly two vertices in $\dom(\bigcup_ip_i)$ and there are distinct indices $i, j\in k$ such that (say) $x_1\in\dom(p_i\setminus \bar p)$ and $x_2\in\dom(p_j\setminus\bar p)$ then either $\{x_0, x_2\}\not\subset p_i(x_1)$ or $p_i(x_1)$ has diameter in $b_{ij}$ while $q(x_0)$ does not, by the choice of the set $b_{ij}(x)$. This means that the hyperedge $e$ is not monochromatic.

\noindent\textbf{Case 4.} If $e$ contains two or three vertices in the set $G$ then $e$ is not monochromatic since the function $q\restriction G$ is an injection.

Finally, I have to show that for all $i\in k$, $q\leq p_i$ holds. This is in fact another small ordeal. Fix $i\in k$; I must verify items (2) and (3) of Definition~\ref{posetdefinition}. Start with item (2). Let $x\in F^d$ be a point not in $\dom(p_i)$. There are two cases. 

\noindent\textbf{Case 1.} There is $j\neq i$ such that $x\in\dom(p_j)$. In this case, use Proposition~\ref{productproposition} and the mutual genericity assumption to see that $D(\mathbb{R}\cap V, x)=D(\supp(p_i), x)$ and then apply item (2) of Definition~\ref{posetdefinition} of $p_j\leq \bar p$ to see that $q(x)=p_j(x)$ has diameter smaller than $\eps(\supp(p_i), x)$.

\noindent\textbf{Case 2.} $x\notin\bigcup_j\dom(p_j)$; in this case, $\diam(q(x))<\eps(\supp(p_i), x)$ by the choice of the function $q$.

Now, to verify item (3), let $a\subset\mathbb{R}$ be an arbitrary finite set. For every index $j\neq i$, use Proposition~\ref{reflectionproposition} to find a finite set $b_j\subset\supp(p_j)$ such that for every point $x\in\dom(p_j)$, $D(\supp(p_i)\cup b_j, x)\subseteq D(\supp(p_i)\cup a, x)$. Use Proposition~\ref{productproposition} and the mutual genericity assumption to conclude that for every point $x\in\dom(p_j)$, $D((\mathbb{R}\cap V)\cup b_j, x)\subseteq D(\supp(p_i)\cup a, x)$. Conclude that $C(p_i, p_j, a)\cap\dom(p_j)\subseteq C(\bar p, p_j, b_j)$. Now, it is clear that $C(p_i, q, a)\subseteq G\cup\bigcup_{j\neq i}C(p_i, q, a)\cap\dom(p_j)\subseteq G\cup\bigcup_{j\neq i}C(\bar p, p_j, b_j)$. It follows that the set $q^*C(p_i, q, a)$ belongs to the ideal $I$: $q^*G\in I$ holds by the construction of the map $q$, and for every $j\in i$ $q^*C(p_i, q, a)\cap\dom(p_j)\subseteq q^*C(\bar p, p_j, b_j)\in I$ by Definition~\ref{posetdefinition}(3) applied to $p_j\leq\bar p$. The proof is complete.
\end{proof}

\begin{corollary}
\label{balancecorollary}
(CH) The poset $P$ is balanced and even $(4,3)$-balanced.
\end{corollary}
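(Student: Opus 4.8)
Corollary~\ref{balancecorollary} is a bookkeeping consequence of the two substantive results already in hand, Propositions~\ref{barproposition} and~\ref{balanceproposition}; the only work is to line them up against the definitions of \emph{balanced} and \emph{$(4,3)$-balanced} poset in \cite{z:geometric}. The plan is first to recall those definitions: a \emph{virtual condition} of $P$ is (represented by) a $\coll(\gw,\R)$-name for a condition of $P$ as interpreted in the collapse extension; $P$ is \emph{balanced} if below every condition of $P$ there is a balanced virtual condition, where $\bar p$ is balanced if for any two mutually generic extensions $V[G_0], V[G_1]$ and any conditions $p_0\leq\bar p$ in $V[G_0]$ and $p_1\leq\bar p$ in $V[G_1]$, the conditions $p_0,p_1$ are compatible in $P$; and $P$ is \emph{$(4,3)$-balanced} if moreover the balanced virtual condition $\bar p$ below a given condition can be chosen so that whenever $V[G_i]$ for $i\in 4$ are triple-wise mutually generic extensions and $p_i\leq\bar p$ is a condition in $V[G_i]$ for each $i$, the family $\{p_i:i\in 4\}$ has a common lower bound.

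With these definitions recalled, I would argue as follows. First, given $p\in P$, Proposition~\ref{barproposition} (this is where CH is used) produces a total $\Gamma$-coloring $\bar p$ with $\coll(\gw,\R)\Vdash\bar p\leq p$; since a total $\Gamma$-coloring is forced by $\coll(\gw,\R)$ to be a genuine condition of $P$ as reinterpreted in the collapse extension (its domain becomes $(\R)^d$ with $\R$ now countable and real closed, and it remains a coloring), $\bar p$ is a virtual condition below $p$. Second, to see that $\bar p$ is balanced and even witnesses $(4,3)$-balance, I would simply feed $\bar p$ into Proposition~\ref{balanceproposition} in the role of the total coloring there: applying it with $k=2$ gives, for mutually generic $V[G_0],V[G_1]$ and $p_i\leq\bar p$ in $V[G_i]$, a common lower bound of $p_0,p_1$, which in particular witnesses their compatibility — so $\bar p$ is balanced and $P$ is balanced; applying it with $k=4$ and the triple-wise mutual genericity hypothesis gives a common lower bound of $\{p_i:i\in 4\}$, which is exactly the clause defining $(4,3)$-balance. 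Since Proposition~\ref{balanceproposition} is stated for arbitrary $k$, the same argument in fact shows $P$ is $(n,3)$-balanced for every $n\geq 1$, but only $(4,3)$ is invoked in the introduction.

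I do not expect any real obstacle here: the entire mathematical content is in the already-proved Propositions~\ref{barproposition} and~\ref{balanceproposition}. The only points needing a line of care are (i) verifying that the object $\bar p$ of Proposition~\ref{barproposition} qualifies as a virtual condition in the precise sense of \cite{z:geometric} — coherence across different collapse posets, which follows from mutual genericity and the absoluteness of clauses (1)--(3) of Definition~\ref{posetdefinition} — and (ii) observing that the conclusion of Proposition~\ref{balanceproposition}, namely the existence of a genuine common lower bound, is formally stronger than the bare compatibility demanded in the definition of balance. Both are immediate, so the proof is short.
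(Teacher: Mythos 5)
Your proposal is correct and takes essentially the same approach as the paper: invoke Proposition~\ref{barproposition} to obtain the total coloring $\bar p$ as a virtual condition below $p$, then feed it into Proposition~\ref{balanceproposition} to verify both balance ($k=2$) and $(4,3)$-balance ($k=4$). The paper states this in two sentences; your extra remarks about virtual conditions and about lower bounds being stronger than bare compatibility are accurate elaborations, not departures.
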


\begin{proof}
Let $p\in P$ be any condition. Proposition~\ref{barproposition} provides a total coloring $\bar p$ such that $\coll(\gw, \mathbb{R})\Vdash\bar p\leq p$. Proposition~\ref{balanceproposition} then shows that $\langle\coll(\gw, \mathbb{R}), \bar p\rangle$ is a (4,3)-balanced pair below $p$ as required by the definition of balance in \cite[Definition 13.1.1]{z:geometric}.
\end{proof}

\noindent In the very special case of the hypergraph of equilateral triangles, I get the following instrumental strengthening of balance to amalgamation diagrams with multiple generic extensions.

\begin{proposition}
\label{equibalanceproposition}
Let $d\geq 2$ be a number and let $\Gamma$ be the hypergraph of equilateral triangles on $\mathbb{R}^d$.
Let $j\in\gw$, and let $\langle V[G_i]\colon i\in j\rangle$ be a $j$-tuple of generic extensions such that

\begin{enumerate}
\item for distinct $i_0, i_1\in j$, $V[G_{i_0}]$ and $V[G_{i_1}]$ are mutually generic;
\item for pairwise distinct $i_0, i_1, i_2\in j$, $V[G_{i_0}, G_{i_1}]\cap V[G_{i_1}, G_{i_2}]\cap V[G_{i_0}, G_{i_2}]=V$.
\end{enumerate}

\noindent Assume that $\bar p\in V$ is a total coloring, and for each $i\in j$, $p_i\leq\bar p$ is a condition in $P$ as interpreted in the model $V[G_i]$. Then the conditions $\{p_i\colon i\in j\}$ have a common lower bound in $P$.
\end{proposition}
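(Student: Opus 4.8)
The plan is to follow the template of the proof of Proposition~\ref{balanceproposition}, since the hypotheses of the present proposition are a strengthening of what the argument for arbitrary $2$-irreflexive $\Gamma$ uses: hypothesis (1) is pairwise mutual genericity, and hypothesis (2) replaces the triple-wise mutual genericity of Proposition~\ref{balanceproposition} with the weaker triple-intersection condition $V[G_{i_0},G_{i_1}]\cap V[G_{i_1},G_{i_2}]\cap V[G_{i_0},G_{i_2}]=V$. The whole point is that for the hypergraph of equilateral triangles this weaker condition suffices to run the Case~3 argument, because of the geometric observation highlighted in the introduction: if $\{x_0,x_1,x_2\}$ is an equilateral triangle, the common side length $\ell$ satisfies $\ell=\|x_0-x_1\|=\|x_1-x_2\|=\|x_0-x_2\|$, so $\ell$ is computed inside each of the three pairwise models and hence lies in $V[G_{i_0},G_{i_1}]\cap V[G_{i_1},G_{i_2}]\cap V[G_{i_0},G_{i_2}]=V$. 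I would first reprove Claim~\ref{coloringclaim} in this setting: $\bigcup_i p_i$ is a function by pairwise mutual genericity exactly as before, and it is a coloring because Cases 1 and 2 are literally unchanged, while in Case~3 (a hyperedge $e=\{x_0,x_1,x_2\}$ with vertices spread among three distinct models $V[G_{i_0}],V[G_{i_1}],V[G_{i_2}]$) the side length $\ell$ of $e$ lies in $V$ by the triple-intersection hypothesis, and then one argues directly that $e$ cannot be monochromatic: each $x_m$ is a point with $D(\mathbb R\cap V,x_m)$ nontrivial unless $x_m\in V$, and by Proposition~\ref{productproposition} applied in the appropriate pairwise-generic model the algebraic set $\{y:\|y-x_m\|=\ell\}$, being $V[\text{other two}]$-algebraic and containing the third vertex, must contain $D(\mathbb R\cap V,x_{m'})$ for a suitable $m'$; combining this with Definition~\ref{posetdefinition}(2) for $p_{i_{m'}}\le\bar p$ forces one of the three values $p_{i_m}(x_m)$ to omit another vertex. (If some vertex is already in $V$, then it is in $\bar p$'s domain only after domain extension, but here we are still in $\dom(\bigcup_i p_i)$, so that vertex lies in some $\dom(p_i)$ and Case~2 or Case~1 applies instead.)

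Next I would reprove the second claim of Proposition~\ref{balanceproposition} — the existence for each $x\in\mathbb R^d\setminus\bigcup_i\dom(p_i)$ and each pair of distinct indices $i,j$ of a set $b_{ij}(x)\in I$ controlling the diameters — verbatim, since that claim already used only pairwise mutual genericity and Proposition~\ref{productproposition}. With these two claims in hand I would then perform the domain extension exactly as in Proposition~\ref{balanceproposition}: pick a countable real closed field $F\supseteq\bigcup_i\supp(p_i)$, set $G=F^d\setminus\bigcup_i\dom(p_i)$, choose $b\subset\mathbb Q^+$ converging to $0$ with finite intersection with every $b_{ij}(x)$, and build an injective extension $q$ on $F^d$ with $\diam(q(x))<\eps(\supp(p_i),x)$ and $\diam(q(x))\in b\setminus b_{ij}$ for all relevant $i,j$. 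The verification that $q$ is a coloring then splits into the same four cases as before: Case~1 is the new Claim~\ref{coloringclaim}; Cases 2 and 4 are unchanged; Case~3 (two vertices in $\dom(\bigcup_i p_i)$ lying in different $\dom(p_i),\dom(p_j)$) uses the $b_{ij}(x)$ claim exactly as in Proposition~\ref{balanceproposition}. Finally the verification that $q\le p_i$ for each $i$ — items (2) and (3) of Definition~\ref{posetdefinition} — goes through word for word, since that part of the proof of Proposition~\ref{balanceproposition} invoked only pairwise mutual genericity together with Propositions~\ref{reflectionproposition} and~\ref{productproposition}.

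The main obstacle, and the only genuinely new point, is Case~3 of the coloring verification for $\bigcup_i p_i$ — showing that a hyperedge whose three vertices lie in three distinct generic extensions is not monochromatic, using only hypothesis (2) rather than full triple-wise mutual genericity. The subtlety is that Proposition~\ref{productproposition} as stated requires a genuine mutually generic pair $V[G_0],V[G_1]$, and here we only have pairwise mutual genericity plus the triple-intersection equality; the role of the latter is precisely to certify that the side length $\ell$, which a priori lives in $V[G_{i_0},G_{i_1}]$, actually lies in $V$, so that the set $\{y:\|y-x_m\|=\ell\}$ is genuinely $V[G_{i_{m'}}]$-algebraic (for the pair of indices $\{i_m,i_{m'}\}$) and Proposition~\ref{productproposition} applies with base model $V$. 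I would be careful to phrase this so that the rationality/$V$-membership of $\ell$ is extracted first, then each of the three ``$D(\mathbb R\cap V,x_m)\subseteq\{y:\|y-x_{m'}\|=\ell\}$'' inclusions is obtained from a separate application of Proposition~\ref{productproposition} to a pairwise-generic pair, and only then is Definition~\ref{posetdefinition}(2) invoked to conclude. The equilateral hypothesis is used nowhere else; everything downstream is bookkeeping identical to Proposition~\ref{balanceproposition}.
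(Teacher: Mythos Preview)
Your proposal is correct and matches the paper's approach essentially verbatim: the paper's proof says it is ``literally identical to the proof of Proposition~\ref{balanceproposition} except for the configuration in Case~3 of the proof of Claim~\ref{coloringclaim},'' and then handles Case~3 exactly by observing that the common side length $\delta$ lies in all three pairwise models and hence in $V$ by hypothesis~(2), so that the two spheres of radius $\delta$ centered at $x_0$ and $x_2$ are $V[G_0]$- and $V[G_2]$-algebraic respectively, whence pairwise mutual genericity plus Proposition~\ref{productproposition} yields $D(\mathbb{R}\cap V,x_1)\subseteq$ both spheres, and Definition~\ref{posetdefinition}(2) finishes. One small slip in your write-up: the sphere $\{y:\|y-x_m\|=\ell\}$ is $V[G_{i_m}]$-algebraic (its center $x_m$ lives there and $\ell\in V$), not $V[G_{i_{m'}}]$-algebraic; you then apply Proposition~\ref{productproposition} to the mutually generic pair $V[G_{i_m}],V[G_{i_{m'}}]$ to get $D(\mathbb{R}\cap V,x_{m'})$ inside that sphere---and you need exactly two such inclusions (for the two spheres centered at the ``other'' vertices), not three.
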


\noindent Note that mutually generic triples of generic extensions satisfy item (2) by the product forcing theorem, so the amalgamation assumptions here are weaker than in the case of Proposition~\ref{balanceproposition}. The weakening will be a critical ingredient in the proof of Theorem~\ref{maintheorem} below.

\begin{proof}
The proof is literally identical to the proof of Proposition~\ref{balanceproposition} except for the configuration in Case 3 of the proof  of Claim~\ref{coloringclaim}. To argue for that in the special case of equilateral triangles from the weaker assumptions on the tuple of generic extensions, proceed as follows.

Suppose that $\{x_0, x_1, x_2\}$ is an equilateral triangle in the domain of $\bigcup_{i\in k}p_i$ and there are pairwise distinct numbers $i_0, i_1, i_2\in j$ such that $x_k\in \dom(p_{i_j}\setminus p)$ for all $j\in 3$; re-numbering if necessary, we may assume that $i_j=j$ for all $j\in 3$. Let $\gd>0$ be the common length of the edges of the equilateral triangle $\{x_0, x_1, x_2\}$. Since $\gd\in V[G_0, G_1]\cap V[G_1, G_2]\cap V[G_0, G_2]$, the amalgamation assumption (2) implies that $\gd\in V$ holds. Now, the pairwise genericity assumption (1) and Proposition~\ref{productproposition} show that $D(V\cap\mathbb{R}, x_1)$ is a subset of both spheres of radius $\gd$ centered at $x_0$ and $x_2$, as these spheres are algebraic sets defined from parameters in $V[x_0]$ and $V[x_2]$ respectively and contain $x_1$. It follows that for every point $y\in D(V\cap\mathbb{R}, x_1)$, $\{x_0, y, x_2\}\in\Gamma$ holds. Now, Definition~\ref{posetdefinition}(2) applied to $p_1\leq p$ shows that the diameter of $p_1(x)$ is smaller than $\eps(V\cap\mathbb{R}, x_1)$. The definition of $\eps(V\cap\mathbb{R}, x_1)$ implies that $\{x_0, x_2\}\not\subset p_1(x_1)$; therefore, the triangle $\{x_0, x_1, x_2\}$ cannot be monochromatic.
\end{proof}

\section{The independence result}
\label{independencesection}

The proof of Theorem~\ref{maintheorem} relies on the methodology of geometric set theory. The key observation regarding isosceles triangles is the following.

\begin{proposition}
\label{isoproposition}
Let $\{x_0, x_1, x_2\}$ be an isosceles triangle in $\mathbb{R}^2$ Cohen-generic over the ground model in $V$. Then

\begin{enumerate}
\item each point $x_i$ for $i\in 3$ is a Cohen-generic point of $\mathbb{R}^2$;
\item the models $V[x_0], V[x_1], V[x_2]$ are pairwise mutually generic;
\item the intersection of the three models $V[x_0, x_1]$, $V[x_0, x_2]$, and $V[x_1, x_2]$ is equal to $V$.
\end{enumerate}
\end{proposition}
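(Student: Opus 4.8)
The plan is to pin down the forcing and then treat (1)--(3) in turn. Labeling an isosceles triangle so that $x_1$ is the apex (generically this is unambiguous), the relevant Polish space is the algebraic set $Y=\{(x_0,x_1,x_2)\in(\mathbb{R}^2)^3\colon |x_0-x_1|=|x_1-x_2|\}$. Off a lower-dimensional, hence meager, set $Y$ is a smooth $5$-manifold, and the map $(x_0,x_1,x_2)\mapsto(x_0,x_1,\theta)$ with $\theta=\arg(x_2-x_1)$ is a homeomorphism between comeager subsets of $Y$ and of $\mathbb{R}^2\times\mathbb{R}^2\times S^1$. Thus an isosceles triangle Cohen-generic over $V$ is a triple $(x_0,x_1,x_2)$ for which $(x_0,x_1,\theta)$ is Cohen-generic over $V$ for this product; in particular $x_0$, $x_1$, $\theta$ are mutually Cohen-generic over $V$, and $x_2=x_1+|x_0-x_1|\cdot(\cos\theta,\sin\theta)$. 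Two routine facts will be used repeatedly: a continuous surjection of Polish spaces that is open off a meager set carries Cohen-generic points to Cohen-generic points (preimages of meager sets are meager), and the Kuratowski--Ulam theorem.

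For (1): the coordinate projections $\pi_0,\pi_1\colon Y\to\mathbb{R}^2$ are continuous surjections, open off a meager set, so $x_0$ and $x_1$ are Cohen-generic points of $\mathbb{R}^2$; and $(x_0,x_1,x_2)\mapsto x_2$ is a continuous surjection open off a meager set (moving $x_1$ with $x_0,\theta$ fixed is a submersion away from the locus where the direction $(\cos\theta,\sin\theta)$ is parallel to $x_1-x_0$), so $x_2$ is Cohen-generic too --- alternatively by the homeomorphism of $Y$ swapping $x_0$ and $x_2$. For (2): each projection $Y\to(\mathbb{R}^2)^2$ onto a pair of vertices is a continuous surjection, open off a meager set (for $(x_0,x_2)$ one uses that the perpendicular bisector of $[x_0,x_2]$ is nonempty, working off the diagonal $x_0=x_2$, and openness there is checked by perturbing), so every pair $(x_i,x_j)$ is a Cohen-generic point of $(\mathbb{R}^2)^2$; since finite products of Cohen forcing are Cohen forcing, $(\mathbb{R}^2)^2$-Cohen is the product of two copies of $\mathbb{R}^2$-Cohen, and the product forcing theorem gives that $V[x_i]$ and $V[x_j]$ are mutually generic.

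For (3), the crux, let $z$ be in the triple intersection; we may assume $z\in\cantor$. By (2) each pair of vertices is a Cohen-generic point of $(\mathbb{R}^2)^2$, so $z=f_{01}(x_0,x_1)=f_{02}(x_0,x_2)=f_{12}(x_1,x_2)$ for Borel functions coded in $V$. The main step is: $z\in V[x_0]$. In the generic coordinates $(a,b,\varphi)=(x_0,x_1,\theta)$ the equality $f_{01}(a,b)=f_{02}\bigl(a,\,b+|a-b|(\cos\varphi,\sin\varphi)\bigr)$ holds at the generic point, hence --- below a condition in the generic filter, which I suppress --- on a comeager set of $(a,b,\varphi)$. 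By Kuratowski--Ulam fix a value of $a$ for which the corresponding section is comeager in $\mathbb{R}^2\times S^1$; for the continuous map $G_a(b,\varphi)=b+|a-b|(\cos\varphi,\sin\varphi)$, which is onto $\mathbb{R}^2$ and open off a meager set, the function $(b,\varphi)\mapsto f_{02}(a,G_a(b,\varphi))$ is constant on each fiber $G_a^{-1}(c)=\{(b,\widehat{c-b})\colon b\in\ell_{a,c}\}$, where $\ell_{a,c}$ is the perpendicular bisector of $[a,c]$. Trivializing the family of these fibers over $c$ so that $G_a$ becomes a coordinate projection and applying Kuratowski--Ulam to it, we obtain that $f_{01}(a,\cdot)$ is comeager-constant on $\ell_{a,c}$ for comeager-many $c$. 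As $c$ ranges over $\mathbb{R}^2\setminus\{a\}$ the lines $\ell_{a,c}$ are exactly the lines of $\mathbb{R}^2$ not through $a$; since any two generic points of $\mathbb{R}^2$ lie --- generically in the good part of --- such a line, a further Kuratowski--Ulam argument forces $f_{01}(a,\cdot)$ to be comeager-constant, with value $\Lambda(a)$ depending on $a$ alone. This holds for comeager-many $a$, so $f_{01}(a,b)=\Lambda(a)$ on a comeager $V$-coded set, whence $z=f_{01}(x_0,x_1)=\Lambda(x_0)\in V[x_0]$. Now the forcing ``isosceles triangle with apex $x_1$, Cohen-generic over $V$'' is invariant under the homeomorphism of $Y$ interchanging the legs $x_0$ and $x_2$; running the same argument with $x_0$ and $x_2$ exchanged (using $f_{12}$ and $f_{02}$) gives $z\in V[x_2]$. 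Therefore $z\in V[x_0]\cap V[x_2]=V$ by (2), proving (3).

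The step I expect to be the main obstacle is the ``$z\in V[x_0]$'' claim: organizing the two Kuratowski--Ulam applications so that the category Fubini theorem genuinely applies (which requires trivializing the line bundle whose fiber over $c$ is $G_a^{-1}(c)$ so that $G_a$ is a coordinate projection), isolating and using the correct geometric fact (the perpendicular bisectors of $[a,c]$, $c\neq a$, exhaust the lines of $\mathbb{R}^2$ avoiding $a$, so that ``comeager-constant on comeager-many of them'' upgrades to ``comeager-constant''), and keeping honest track of the ``below a condition'' bookkeeping so the final comeager sets lie in the generic filter. Parts (1) and (2), the leg-interchange symmetry, and the closing appeal to mutual genericity are routine given the above.
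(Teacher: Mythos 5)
Your proof is correct, but for part (3)---the heart of the proposition---you take a genuinely different route than the paper. Parts (1) and (2) go the same way in both: the coordinate projections off the isosceles variety are continuous and open off a meager set, so they transport Cohen-generic points, and mutual genericity in (2) follows from identifying $P_{\mathbb{R}^2\times\mathbb{R}^2}$ with the product of two Cohen posets. For (3), the paper argues entirely at the level of forcing relations via what the author calls the \emph{duplication technique}: it introduces the auxiliary variety $D\subset(\mathbb{R}^2)^4$ of quadruples $\langle u_0,u_{00},u_1,u_2\rangle$ with both $\langle u_0,u_1,u_2\rangle$ and $\langle u_{00},u_1,u_2\rangle$ isosceles, checks that the three natural projections from $D$ are open (so $\langle\dot x_0,\dot x_1,\dot x_2\rangle$ and $\langle\dot x_{00},\dot x_1,\dot x_2\rangle$ are each $P_C$-generic while $\langle\dot x_0,\dot x_{00},\dot x_1\rangle$ is Cohen-generic in $(\mathbb{R}^2)^3$), and notes that any name in $V[\dot x_0,\dot x_1]\cap V[\dot x_1,\dot x_2]$ evaluates the same at $\dot x_0$ and at its duplicate $\dot x_{00}$, so the product forcing theorem over $V[\dot x_1]$ pushes the value into $V[\dot x_1]$; this yields the stronger pairwise identities $V[\dot x_0,\dot x_1]\cap V[\dot x_1,\dot x_2]=V[\dot x_1]$ and $V[\dot x_0,\dot x_2]\cap V[\dot x_1,\dot x_2]=V[\dot x_2]$, from which (3) follows by (2). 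You instead work directly with Borel codes $f_{ij}$ for the common value, push the identity $f_{01}(a,b)=f_{02}(a,G_a(b,\varphi))$ through a chain of Kuratowski--Ulam applications, and conclude that $f_{01}(a,\cdot)$ is comeager-constant. Your route is workable but, as you yourself flag, noticeably more delicate: the upgrade from ``comeager-constant on comeager-many perpendicular bisectors $\ell_{a,c}$'' to ``comeager-constant on $\mathbb{R}^2$'' genuinely requires a second Kuratowski--Ulam pass (for instance through the generically open map $(b_1,b_2)\mapsto c$ sending a pair to the reflection of $a$ across the line through $b_1,b_2$, to get $f_{01}(a,b_1)=f_{01}(a,b_2)$ for comeager pairs), and the ``below a condition'' bookkeeping must be carried through every step. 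The paper's duplication trick avoids all of this geometry by exploiting the open projections of $D$ once and for all, which is precisely why the author flags it as a reusable method; on the other hand your argument, being a direct category computation on Borel functions, makes the geometric content of the claim---that the two ``leg'' coordinates separate the common value---more transparent.
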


\noindent To parse the proposition correctly, consider the set $C\subset (\mathbb{R}^2)^3$ consisting of isosceles triangles in $\mathbb{R}^2$. This is a $G_\gd$-set, therefore Polish in the inherited topology. One can consider the Cohen poset of nonempty open subsets of $C$ ordered by inclusion; it adds a generic element of $C$. The triangle of Proposition~\ref{isoproposition} is generic for this Cohen poset.

\begin{proof}
The proof is a rather mechanical application of the calculus of generic points of semi-algebraic sets. For completeness, I recall the basic tenets of this calculus. If $X$ is a Polish space, a Cohen-generic element of $X$ over a transitive model $V$ of ZFC is a point $x\in X$ which belongs to all dense open subsets of $X$ coded in $V$. It is obtained by forcing with the Cohen poset $P_X$ of all nonempty open subsets of $X$ ordered by inclusion; its name for the generic point of $X$ will be denoted by $\dot x$. The following is proved by an immediate density argument.

\begin{claim}
\label{openclaim}
Let $X, Y$ be Polish spaces.

\begin{enumerate}
\item If $f\colon X\to Y$ is a continuous open function then $P_X$ forces the $f$-image of the generic point $\dot x$ to be $P_Y$-generic over $V$;
\item $P_{X\times Y}$ forces the points of generic pair $\langle \dot x, \dot y\rangle$ to be mutually generic over $V$ for the $P_X$ and $P_Y$ poset respectively.
\end{enumerate}
\end{claim}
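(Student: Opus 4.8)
The plan is to prove both items by the routine density computation for Cohen forcing on a Polish space, using throughout the standard characterization of genericity: a point $y\in Y$ is $P_Y$-generic over $V$ if and only if $y$ belongs to every dense open subset of $Y$ coded in $V$; equivalently, the filter $\{W\in P_Y\colon y\in W\}$ meets every open dense subset of $P_Y$ lying in $V$. Since $P_{X\times Y}$ and $P_X\times P_Y$ are the posets at play, all the work is in matching up dense sets.

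For item (1), I would first record the key observation that a continuous open $f\colon X\to Y$ pulls dense open sets back to dense open sets: if $U\subseteq Y$ is dense open, then $f^{-1}(U)$ is open by continuity of $f$, and it is dense because for any nonempty open $O\subseteq X$ the image $f[O]$ is a nonempty open subset of $Y$ (this is exactly where openness of $f$ is used), hence meets $U$, so $O\cap f^{-1}(U)\neq\emptyset$. As $\dot x$ is forced to lie in every dense open subset of $X$ coded in $V$, it lies in particular in $f^{-1}(U)$ for every dense open $U\subseteq Y$ coded in $V$, i.e.\ $f(\dot x)\in U$, and therefore $f(\dot x)$ is $P_Y$-generic over $V$. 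Phrased directly as a density argument inside $P_X$: given $O\in P_X$ and dense open $U\subseteq Y$ coded in $V$, choose $y\in f[O]\cap U$ and $x'\in O\cap f^{-1}(y)$; then $O\cap f^{-1}(U)$ is a condition in $P_X$ below $O$ forcing $f(\dot x)\in U$.

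For item (2), I would exhibit a dense embedding showing $P_{X\times Y}$ is forcing-equivalent to the product $P_X\times P_Y$. The map $(O,O')\mapsto O\times O'$ is an order isomorphism of $P_X\times P_Y$ onto the family of nonempty open rectangles in $X\times Y$, and that family is dense in $P_{X\times Y}$ since open rectangles form a basis for the product topology. Hence a $P_{X\times Y}$-generic filter $G$, intersected with the rectangles, generates a $(P_X\times P_Y)$-generic filter which by the product forcing theorem decomposes as $H_X\times H_Y$ with $H_X$ being $P_X$-generic, $H_Y$ being $P_Y$-generic, and $H_X,H_Y$ mutually generic over $V$; the coordinates of the generic point $\langle\dot x,\dot y\rangle$ are recovered as the unique points lying in $\bigcap H_X$ and $\bigcap H_Y$ respectively, so they are mutually generic as claimed. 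I do not expect any genuine obstacle here; the only points calling for care are that openness of $f$ in item (1) is really needed (a constant $f$ fails), and that in item (2) one must remember that the Cohen-generic point of a Polish space is recovered as the unique element in the intersection of its generic filter, so the rectangle decomposition does indeed split the generic point into its two coordinates.
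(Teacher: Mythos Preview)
Your proposal is correct and matches the paper's approach exactly: the paper simply states that the claim ``is proved by an immediate density argument'' without giving any further details, and what you have written is precisely that density argument spelled out. Your treatment of item (2) via the dense embedding of $P_X\times P_Y$ into $P_{X\times Y}$ by open rectangles is the standard way to cash out the density argument for products, so there is nothing to add.
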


\noindent If $m\geq 1$ is a number, then every semi-algebraic set $C\subset\mathbb{R}^m$ is a Boolean combination of closed sets by the quantifier elemination theorem, therefore it is $G_\gd$ in $\mathbb{R}^m$ and Polish in the inherited topology. In this sense I speak of Cohen-generic elements of $C$. The poset $P_C$ is realized as the poset of all basic open sets $O\subset\mathbb{R}^m$ with nonempty intersection with $C$, ordered by inclusion. 

Let $C\subset (\mathbb{R}^2)^3$ be the semi-algebraic set of all triples $\langle u_0, u_1, u_2\rangle$ consisting of pairwise distinct points such that the Euclidean distance from $u_0$ to $u_1$ or $u_2$ is the same. Denote the $P_C$-names for points in the generic triple by $\dot x_0, \dot x_1, \dot x_2$.

\begin{claim}
\label{3claim}
$P_C\Vdash\langle \dot x_0, \dot x_1\rangle$ is a Cohen generic point of $\mathbb{R}^2\times\mathbb{R}^2$. The same is true for any other pair of vertices of the generic triple.
\end{claim}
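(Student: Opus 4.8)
The plan is to realize the map sending a generic triple to its first two vertices as a \emph{continuous open surjection} between the relevant Polish spaces, and then invoke Claim~\ref{openclaim}. Concretely, let $\proj\colon C\to(\R^2)^2$ be the restriction to $C$ of the coordinate projection $\langle u_0,u_1,u_2\rangle\mapsto\langle u_0,u_1\rangle$. The target is visibly $\R^2\times\R^2$, which carries its own Cohen poset $P_{(\R^2)^2}=P_{\R^2}\times P_{\R^2}$; so once I know $\proj$ is continuous, open, and onto, Claim~\ref{openclaim}(1) applied to $\proj$ yields that $P_C$ forces $\langle\dot x_0,\dot x_1\rangle$ to be Cohen-generic for $(\R^2)^2$, which is exactly the assertion. (By the symmetry of the defining condition of $C$ under permuting the three vertices, the same argument applies verbatim to the pair $\langle\dot x_0,\dot x_2\rangle$ and $\langle\dot x_1,\dot x_2\rangle$.)

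Continuity of $\proj$ is automatic since it is a restriction of a coordinate projection. Surjectivity is also easy: given any $\langle u_0,u_1\rangle$ with $u_0\neq u_1$, the point $u_2$ obtained by (say) rotating $u_1$ about $u_0$ by a small generic angle lies at distance $|u_0-u_1|$ from $u_0$, is distinct from both $u_0$ and $u_1$, and therefore $\langle u_0,u_1,u_2\rangle\in C$; the diagonal $u_0=u_1$ is not in the domain of $\proj$ anyway since $C$ consists of triples of pairwise distinct points, so in the subspace topology it is irrelevant. The one genuine point to check is \textbf{openness}: that $\proj$ carries relatively open subsets of $C$ to open subsets of $(\R^2)^2$.

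I expect openness to be the main obstacle, though a fairly mild one. The cleanest approach is to work with a basic relatively open box $W=(O_0\times O_1\times O_2)\cap C$, where each $O_i\subset\R^2$ is a basic open set, and to show that $\proj[W]$ is open: given $\langle u_0,u_1,u_2\rangle\in W$, I must produce, for every $\langle u_0',u_1'\rangle$ near $\langle u_0,u_1\rangle$, a witness $u_2'\in O_2$ making $\langle u_0',u_1',u_2'\rangle$ an isosceles triple. The set $C$ is the union of three ``sheets'' (corresponding to which pair of distances coincides), and on the sheet containing the original triple, the locus of admissible $u_2'$ — a full circle centered at $u_0'$ or $u_1'$, minus two points — varies continuously with $\langle u_0',u_1'\rangle$; since the original $u_2$ lies on this circle and inside the open set $O_2$, a small perturbation of $\langle u_0,u_1\rangle$ still leaves a nearby circle point inside $O_2$ and inside $O_0\times O_1\times O_2$. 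This is the standard ``semialgebraic fibration'' picture underlying the calculus of generic points of semialgebraic sets; I will phrase it as: each of the finitely many sheets of $C$ projects openly onto its image in $(\R^2)^2$ because the fiber is a nonempty real-algebraic curve varying continuously (indeed semialgebraically) over an open base, and a point interior to an open box in the total space has an interior image. Having established openness, the claim follows immediately from Claim~\ref{openclaim}(1).
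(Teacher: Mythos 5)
Your proposal follows exactly the paper's route: reduce the claim to showing the coordinate projection $C\to\R^2\times\R^2$ is continuous and open, and invoke Claim~\ref{openclaim}(1). The paper simply asserts openness is ``easily checked''; you supply the fibration argument. One small inaccuracy worth flagging: the set $C$ as defined in the paper is the single sheet $\{|u_0-u_1|=|u_0-u_2|\}$ (apex at $u_0$), not the union of three sheets, and for the projection to $(u_1,u_2)$ the fiber is a perpendicular bisector line, not a circle, so ``verbatim'' symmetry overstates the case slightly; nevertheless the openness argument works the same way for each projection, and the claim stands.
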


\begin{proof}
The projection from $C$ to any pair of coordinates is easily checked to be an open continuous map from $C$ to $\mathbb{R}^2\times\mathbb{R}^2$. Claim~\ref{openclaim} completes the proof.
\end{proof}

\noindent The following computation is a little more involved. The proof is a great example of a powerful method I call the \emph{duplication technique}.

\begin{claim}
\label{1claim}
$P_C$ forces $V[\dot x_0, \dot x_1]\cap V[\dot x_1, \dot x_2]=V[\dot x_1]$.
\end{claim}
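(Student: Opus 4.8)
The plan is to prove Claim~\ref{1claim} by the duplication technique: take two mutually generic triples sharing the same middle vertex $\dot x_1$, and show that any real in $V[\dot x_0,\dot x_1]\cap V[\dot x_1,\dot x_2]$ must lie in $V[\dot x_1]$ because it is computed identically from two independent copies of $\dot x_2$ (resp.\ $\dot x_0$). More precisely, fix a $P_C$-name $\dot r$ for an element of $V[\dot x_0,\dot x_1]\cap V[\dot x_1,\dot x_2]$ and a condition forcing this; I would first arrange a larger forcing adding a triple $\langle x_0,x_1,x_2\rangle$ together with a second triple $\langle x_0',x_1,x_2'\rangle$ with the \emph{same} middle vertex, such that $\langle x_0,x_1,x_2\rangle$ and $\langle x_0',x_1,x_2'\rangle$ are each $P_C$-generic, and moreover $x_2$ and $x_2'$ are mutually generic over $V[x_0,x_1]$ while $x_0$ and $x_0'$ are mutually generic over $V[x_1,x_2]$ — this is where Claim~\ref{3claim} and Claim~\ref{openclaim} do the bookkeeping, since the relevant coordinate projections of $C$ (fixing $x_1$, or fixing the pair $x_0,x_1$, or fixing $x_1,x_2$) are open continuous maps, and fibered products of Cohen forcings over a common factor still add mutually generic objects over the extension by that factor.

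Then I would run the standard argument. On the one hand, $\dot r\in V[x_1,x_2]$, so $\dot r$ depends only on $x_1$ and $x_2$, hence it is unchanged if we replace $x_0$ by $x_0'$: there is a single name $\sigma(x_1,x_2)$ with $r=\sigma(x_1,x_2)$ regardless of which first vertex we picked. On the other hand, $\dot r\in V[x_0,x_1]$ likewise gives $r=\tau(x_0,x_1)=\tau(x_0',x_1)$ (the same name evaluated at either first vertex, since in each triple $r$ is computed from the first two vertices). Combining, $\tau(x_0,x_1)=\tau(x_0',x_1)$, and $x_0,x_0'$ are mutually generic over $V[x_1]$; a value that is forced by the product $P\times P$ (over $V[x_1]$) to be the same on both coordinates must already lie in $V[x_1]$ — this is the mutual-genericity intersection lemma $V[x_0,x_1]\cap V[x_0',x_1]=V[x_1]$ applied in the extension by $x_1$. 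Hence $r\in V[x_1]$. The reverse inclusion $V[\dot x_1]\subseteq V[\dot x_0,\dot x_1]\cap V[\dot x_1,\dot x_2]$ is trivial.

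The one geometric point that needs care, and which I expect to be the main obstacle, is constructing the duplicated amalgamation so that all the genericity demands hold simultaneously. Fixing $x_1$ and letting $x_0$ range over $\mathbb{R}^2\setminus\{x_1\}$, the set of legal $x_2$ (those making $\langle x_0,x_1,x_2\rangle$ isosceles with $x_1$ a vertex) is a semialgebraic subset of $\mathbb{R}^2$ depending on $x_0$ (union of circles centered at $x_0$ and at $x_1$), and I need the fibered system $\langle x_1; (x_0,x_2),(x_0',x_2')\rangle$ to be generic for an appropriate Cohen poset with $\langle x_0,x_2\rangle$, $\langle x_0',x_2'\rangle$ each $P_{C}$-generic over $V[x_1]$ and the two pairs mutually generic over $V[x_1]$. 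This is arranged by taking the Cohen forcing on the semialgebraic set $\{(x_1,x_0,x_2,x_0',x_2'): \langle x_0,x_1,x_2\rangle\in C,\ \langle x_0',x_1,x_2'\rangle\in C\}$ and checking that each of the coordinate projections onto $(x_1)$, onto $(x_0,x_1,x_2)$, onto $(x_0',x_1,x_2')$, and onto $(x_0,x_2,x_0',x_2')$ over a fixed $x_1$ is open — openness is where $2$-irreflexivity–style transversality of the circles is used, i.e.\ the distinct circles defining $C$ meet properly, so the projection of a basic open box again contains a basic open box. Once openness is in hand, Claim~\ref{openclaim} turns each projection into the required genericity statement, and the rest is the routine product-forcing intersection computation.
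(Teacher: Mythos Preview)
Your general strategy (duplication) is right, but the specific amalgam you build does not supply the bridge you need. In your $5$-tuple $\{(x_1,x_0,x_2,x_0',x_2')\colon \langle x_0,x_1,x_2\rangle\in C,\ \langle x_0',x_1,x_2'\rangle\in C\}$ the two $P_C$-generic triples share only $x_1$. From the first triple you get $\tau(x_0,x_1)=\sigma(x_1,x_2)$; from the second you get $\tau(x_0',x_1)=\sigma(x_1,x_2')$. These are two equations with no common term, so the step ``$\tau(x_0,x_1)=\tau(x_0',x_1)$'' is unjustified: you are tacitly evaluating the same name $\dot r$ on two different $P_C$-generics and treating the results as equal. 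To link $\tau(x_0,x_1)$ with $\tau(x_0',x_1)$ you need a \emph{common} witness on the $\{x_1,x_2\}$ side, i.e.\ the two triples must share both $x_1$ and $x_2$, not just $x_1$.

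The paper does exactly this: it works with the $4$-tuple set $D=\{(u_0,u_{00},u_1,u_2)\colon \langle u_0,u_1,u_2\rangle\in C,\ \langle u_{00},u_1,u_2\rangle\in C\}$, duplicating only the apex $x_0$ and keeping $x_1,x_2$ fixed. Then $\sigma(x_1,x_2)$ is literally the same object in both triples, giving $\tau(x_0,x_1)=\sigma(x_1,x_2)=\tau(x_{00},x_1)$ immediately. The remaining point is that the projection of $D$ to the $(0,00,1)$ coordinates is an open map onto $(\mathbb{R}^2)^3$ (given generic $u_0,u_{00},u_1$, take $u_2$ to be the reflection of $u_1$ across the line through $u_0,u_{00}$), so $x_0,x_{00}$ are mutually Cohen over $V[x_1]$ and the product-forcing intersection lemma finishes. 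Your openness discussion is fine in spirit, but it should be applied to this $4$-tuple $D$ rather than to your $5$-tuple.
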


\begin{proof}
Let $D\subset(\mathbb{R}^2)^4$ be the semi-algebraic set of all quadruples $\langle u_0, u_{00}, u_1, u_2\rangle$ consisting of pairwise distinct points such that $\langle u_0, u_1, u_2\rangle\in C$ and $\langle u_{00}, u_1, u_2\rangle\in C$. It is easily checked that the two projections to the coordinates indexed by $0, 1, 2$ and to the coordinates indexed by $00, 1, 2$ are both continuous open maps from $D$ to $C$, and the projection to the coordinates indexed by $0, 00, 1$ is a continuous open function from $D$ to $(\mathbb{R}^2)^3$. Denote the $P_D$-names for the points in the generic quadruple by $\dot x_0, \dot x_{00}, \dot x_1, \dot x_2$. Claim~\ref{openclaim} then immediately shows that $P_D$ forces the following:

\begin{itemize}
\item $\langle \dot x_0, \dot x_1, \dot x_2 \rangle$ is a Cohen-generic point of $C$;
\item $\langle \dot x_{00}, \dot x_1, \dot x_2 \rangle$ is a Cohen-generic point of $C$;
\item $\langle \dot x_0, \dot x_{00}, \dot x_1 \rangle$ is a Cohen-generic point of $(\mathbb{R}^2)^3$.
\end{itemize}

 \noindent To prove the claim, suppose that $O_0\times O_1\times O_2$ is a condition in the poset $P_C$, and $\tau_{01}, \tau_{02}$ are two P$_{\mathbb{R}^2\times\mathbb{R}^2}$-names for sets of ordinals such that $O_0\times O_1\times O_2$ forces that $\tau_{01}/\dot x_0,\dot x_1=\tau_{02}/\dot x_1, \dot x_2$. I must find a stronger condition which forces the common value to belong to the model $V[\dot x_2]$. To this end, consider the condition $p=O_0\times O_0\times O_1\times O_2$ in $P_D$. The first two items above imply that this condition forces $\tau_{01}/\dot x_0, \dot x_1$ and $\tau_{01}/\dot x_{00}, \dot x_1$ to be both equal to the value $\tau_{12}/\dot x_1, \dot x_2$; in particular, they are forced to be equal. Their common value belongs to the intersection $V[\dot x_0, \dot x_1]\cap V[\dot x'_0, \dot x_1]$. However, the third item above shows that these two models are mutually generic extensions of $V[\dot x_1]$; so, the common value must belong to the model $V[\dot x_1]$ by the product forcing theorem. It follows that some condition below $O_0\times O_1\times O_2$ must force $\tau_{01}/\dot x_0, \dot x_1\in V[\dot x_1]$ as desired.
\end{proof}

\noindent A symmetrical argument shows the following:

\begin{claim}
\label{2claim}
$P_C$ forces $V[\dot x_0, \dot x_2]\cap V[\dot x_1, \dot x_2]=V[\dot x_2]$.
\end{claim}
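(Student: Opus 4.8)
The plan is to follow the proof of Claim~\ref{1claim} essentially verbatim, exploiting the evident symmetry that swaps the two ``equal'' vertices. Recall that in the set $C$, the condition is that the distance from $u_0$ to $u_1$ equals the distance from $u_0$ to $u_2$, or one of the other two pairings holds; the point is that $C$ is symmetric under all permutations of the three coordinates. Since Claim~\ref{2claim} asks about the intersection $V[\dot x_0, \dot x_2]\cap V[\dot x_1, \dot x_2]$, the shared vertex is now $\dot x_2$, and we want to conclude that this intersection equals $V[\dot x_2]$. So the duplication should be applied to the vertex $\dot x_2$ rather than $\dot x_0$.

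Concretely, I would let $D'\subset(\mathbb{R}^2)^4$ be the semialgebraic set of all quadruples $\langle u_0, u_1, u_2, u_{22}\rangle$ of pairwise distinct points such that $\langle u_0, u_1, u_2\rangle\in C$ and $\langle u_0, u_1, u_{22}\rangle\in C$. Exactly as before, the two projections onto the coordinates $0,1,2$ and onto $0,1,22$ are continuous open maps $D'\to C$, and the projection onto coordinates $0,1,2,22$ --- or rather onto $2, 22, 0, 1$ suitably grouped --- gives what we need; more precisely, the relevant projection is the one onto the coordinates $\langle u_2, u_{22}, u_1\rangle$ (or onto $\langle u_2,u_{22},u_0\rangle$), which is a continuous open map from $D'$ to $(\mathbb{R}^2)^3$ by the same elementary check. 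Then $P_{D'}$ forces: $\langle\dot x_0,\dot x_1,\dot x_2\rangle$ is Cohen-generic in $C$; $\langle\dot x_0,\dot x_1,\dot x_{22}\rangle$ is Cohen-generic in $C$; and $\langle\dot x_2,\dot x_{22},\dot x_1\rangle$ is Cohen-generic in $(\mathbb{R}^2)^3$, so in particular $V[\dot x_2]$ and $V[\dot x_{22}]$ are mutually generic over $V[\dot x_1]$ (and likewise over $V[\dot x_0]$, whichever is convenient).

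The rest is the same density argument. Given a condition $O_0\times O_1\times O_2$ in $P_C$ and names $\tau_{02}, \tau_{12}$ for sets of ordinals forced equal after substituting $\langle\dot x_0,\dot x_2\rangle$ and $\langle\dot x_1,\dot x_2\rangle$ respectively, pass to the condition $O_0\times O_1\times O_2\times O_2$ in $P_{D'}$. The first two genericity items force $\tau_{02}/\dot x_0,\dot x_2$ and $\tau_{02}/\dot x_0,\dot x_{22}$ to both equal $\tau_{12}/\dot x_1,\dot x_2$ --- wait, one must be slightly careful here: the common value lands in $V[\dot x_0,\dot x_2]\cap V[\dot x_0,\dot x_{22}]$, and by the third item these are mutually generic extensions of $V[\dot x_0]$, so the common value lies in $V[\dot x_0]$; but we want it in $V[\dot x_2]$. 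So in fact the duplication should instead be arranged so that the shared model is $V[\dot x_2]$: one should apply the technique by duplicating one of the \emph{other} two vertices. Taking $D'$ to duplicate $\dot x_0$ (quadruples $\langle u_0,u_{00},u_1,u_2\rangle$ with $\langle u_0,u_1,u_2\rangle,\langle u_{00},u_1,u_2\rangle\in C$), the two projections onto $0,1,2$ and $00,1,2$ go to $C$, and the projection onto $0,00,2$ goes to $(\mathbb{R}^2)^3$, forcing $V[\dot x_0]$ and $V[\dot x_{00}]$ mutually generic over $V[\dot x_2]$. Then given $\tau_{02},\tau_{12}$ forced equal, the condition $O_0\times O_0\times O_1\times O_2$ in $P_{D'}$ forces $\tau_{02}/\dot x_0,\dot x_2 = \tau_{12}/\dot x_1,\dot x_2 = \tau_{02}/\dot x_{00},\dot x_2$, so the common value lies in $V[\dot x_0,\dot x_2]\cap V[\dot x_{00},\dot x_2]$, which by mutual genericity over $V[\dot x_2]$ equals $V[\dot x_2]$; hence some condition below $O_0\times O_1\times O_2$ forces the common value into $V[\dot x_2]$, as desired. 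The reverse inclusion $V[\dot x_2]\subseteq V[\dot x_0,\dot x_2]\cap V[\dot x_1,\dot x_2]$ is trivial.

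The only place demanding genuine (if routine) attention is the verification that the relevant projection $D'\to(\mathbb{R}^2)^3$ onto coordinates $0, 00, 2$ is open, since $C$ is not the whole space; but this is the identical check already used in Claim~\ref{1claim}, amounting to the observation that one can freely perturb the duplicated vertex while staying in the constraint set. I expect no real obstacle --- the content is entirely in having already proved Claim~\ref{1claim}, and this is its mirror image under the permutation of coordinates interchanging the roles of ``$\dot x_0$'' and ``$\dot x_2$'' as the unshared versus shared vertex.
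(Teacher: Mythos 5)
Your corrected argument is right and is exactly what the paper means by ``a symmetrical argument'': you reuse the set $D$ from Claim~\ref{1claim} (duplicating the vertex $u_0$) but now project onto coordinates $0, 00, 2$, obtaining that $V[\dot x_0, \dot x_2]$ and $V[\dot x_{00}, \dot x_2]$ are mutually generic over $V[\dot x_2]$, while the two projections to $C$ give $\tau_{02}/\dot x_0,\dot x_2=\tau_{12}/\dot x_1,\dot x_2=\tau_{02}/\dot x_{00},\dot x_2$, so the common value lies in $V[\dot x_2]$.

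Two small inaccuracies are worth noting even though they do not damage the final proof. First, the diagnosis of why your initial attempt (duplicating $x_2$) fails is itself off: from $\langle\dot x_0,\dot x_1,\dot x_{22}\rangle$ being Cohen-generic in $C$ you get $\tau_{02}/\dot x_0,\dot x_{22}=\tau_{12}/\dot x_1,\dot x_{22}$, not $=\tau_{12}/\dot x_1,\dot x_2$, so no common value is produced at all; moreover membership in that $D'$ already entails $|u_0-u_2|=|u_0-u_{22}|$, so the projection onto coordinates $2,22,0$ does not have dense image in $(\mathbb{R}^2)^3$, and the mutual genericity over $V[\dot x_0]$ you invoke there would not hold either. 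Second, the symmetry at work is the transposition $1\leftrightarrow 2$ of the vertex coordinates, not $0\leftrightarrow 2$: the paper's $C$ requires $|u_0-u_1|=|u_0-u_2|$, so $u_0$ is a distinguished apex and $C$ is not invariant under all permutations, only under swapping $u_1$ and $u_2$ --- which fortunately is exactly the permutation carrying Claim~\ref{1claim} to Claim~\ref{2claim}.
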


\noindent Now, Claims~\ref{1claim} and~\ref{2claim} show that the intersection of the models $V[\dot x_0, \dot x_1]$, $V[\dot x_1, \dot x_2]$ and $V[\dot x_0, \dot x_2]$ is forced to be equal to the intersection of $V[\dot x_0]$ and $V[\dot x_1]$. Claim~\ref{3claim} shows that these two extensions are mutually generic extensions of the ground model, showing that their intersection is $V$ and completing the proof of the proposition.
\end{proof}

The rest of the proof of Theorem~\ref{maintheorem} is now routine and follows the lines of previous arguments in \cite{z:geometric}. Let $d\geq 2$ be a number. Let $\kappa$ be an inaccessible cardinal. Let $W$ be the choiceless Solovay model derived from the cardinal $\kappa$ \cite[Theorem 26.14]{jech:newset}; by this I mean the subclass of the $\coll(\gw, <\kappa)$-extension of the ground model $V$ consisting of sets hereditarily definable from elements of $V$ and infinite binary sequences in the extension. Let $P$ be the coloring poset of Definition~\ref{posetdefinition} for the hypergraph of equilateral triangles on $\mathbb{R}^d$. Let $G\subset P$ be a filter generic over the model $W$. It will be enough to show that in $W[G]$, DC holds, $\Gamma$ has countable chromatic number, and every non-meager subset of $\mathbb{R}^2$ contains an isosceles triangle.

To argue for these points, first argue that $P$ is $\gs$-closed (Proposition~\ref{sigmaclosureproposition}), $\gs$-closed posets preserve DC; thus, as DC holds in $W$, it holds in $W[G]$ as well. Since $P$ is $\gs$-closed, the extension $W[G]$ does not contain any new reals; consequently, $\bigcup G$ is a $\Gamma$-coloring by the density Proposition~\ref{densityproposition}. Finally, we must show that every non-meager subset of $\mathbb{R}^2$ contains an isosceles triangle. To this end, return to $W$, let $p\in P$ be a condition, and let $\tau$ be a name for a non-meager subset of $\mathbb{R}^2$. Let $z\in\cantor$ be a parameter such that $\tau$ is definable from $\tau$ and an element of $V$. Let $V[K]$ be a model intermediate between $V$ and $W$, containing $z$ and $p$ satisfying the Continuum Hypothesis, and obtained by a poset of cardinality smaller than $\kappa$. 

In the model $V[K]$, find a total $\Gamma$-coloring $\bar p$ such that $p\subset\bar p$ and $\coll(\gw, \mathbb{R})\Vdash\bar p\leq p$ (Proposition~\ref{barproposition}). Since $\tau$ is a $P$-name for a nonmeager set, back in $W$ it is true that $\bar p\Vdash\tau$ contains a point Cohen-generic over $V[K]$. Let $Q$ be the Cohen poset on $\mathbb{R}^2$.  By the forcing theorem applied in $V[K]$ and standard homogeneity facts about $W$, there must be a basic open subset $O\subset\mathbb{R}^2$, a cardinal $\lambda\in\kappa$, and a $Q\times\coll(\gw, \lambda)$-name $\gs$ for a condition in $P$ stronger than $\bar p$ such that

$$O\Vdash_Q\coll(\gw, \lambda)\Vdash\coll(\gw, <\kappa)\Vdash\gs\Vdash_P\dot x\in\tau$$

\noindent where $\dot x$ is the $Q$-name for the Cohen-generic point of $\mathbb{R}^2$ added by $Q$ and $\tau$ is replaced by its definition from the parameter $z$.

Let $R$ be the poset for adding a Cohen-generic isosceles triangle in $\mathbb{R}^2$. Back in the model $W$, pick a triple $\langle x_i\colon i\in 3\rangle$ $R$-generic over $V[K]$ and meeting the condition $O^3\in R$. Proposition~\ref{isoproposition} applied in $V[K]$ shows that the models $V[K][x_i]$ for $i\in 3$ are pairwise mutually generic over $V[K]$, and $\bigcap_{a\in [3]^2}V[K][x_i\colon i\in a]=V[K]$. Pick filters $H_i\subset\coll(\gw, \lambda)$ for $i\in 3$ which are mutually generic over $V[K][x_i\colon i\in 3]$.

\begin{claim}
The models $V[K][x_i][H_i]$ for $i\in 3$ are pairwise mutually generic extensions of $V[K]$. In addition, $\bigcap_{a\in [3]^2}V[K][x_i\colon i\in a][H_i\colon i\in a]=V[K]$.
\end{claim}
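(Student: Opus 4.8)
The plan is to reduce both assertions to Proposition~\ref{isoproposition} applied inside $V[K]$, together with two standard facts of product forcing: that forcing with a ground-model poset over a generic extension yields a product-generic pair (``iteration with ground-model second step equals product''), and that for a product-generic pair over a model $M$ the intersection of the two one-step extensions is $M$. Throughout write $W_0=V[K]$ and $\coll=\coll(\gw,\lambda)$, let $P$ be the Cohen poset of $\mathbb{R}^2$, and for $a\in[3]^2$ put $M^a=W_0[x_i\colon i\in a]$, $M^*=W_0[x_0,x_1,x_2]$, and $N^a=W_0[x_i\colon i\in a][H_i\colon i\in a]$.

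For the pairwise mutual genericity, fix distinct $i,j\in 3$. By Proposition~\ref{isoproposition}, items (1) and (2), applied in $W_0$, each $x_k$ is $P$-generic over $W_0$ and the pair $(x_i,x_j)$ is $(P\times P)$-generic over $W_0$. Since $H_0,H_1,H_2$ are mutually $\coll$-generic over $M^*$, the pair $(H_i,H_j)$ is $(\coll\times\coll)$-generic over $M^*$, hence over the submodel $W_0[x_i,x_j]$; as $\coll\times\coll\in W_0$, the product lemma shows $(x_i,x_j,H_i,H_j)$ is a $(P\times P\times\coll\times\coll)$-generic tuple over $W_0$. Reassociating the four factors as $(P\times\coll)\times(P\times\coll)$, whose coordinate generics are $(x_i,H_i)$ and $(x_j,H_j)$, the product forcing theorem gives that $V[K][x_i][H_i]$ and $V[K][x_j][H_j]$ are mutually generic extensions of $W_0$.

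For the triple intersection I would argue in two steps. First, $\bigcap_a N^a\subseteq M^*$: since $H_0,H_1,H_2$ are mutually $\coll$-generic over $M^*$, the three models $M^*[H_i\colon i\in a]$ for $a\in[3]^2$ have intersection $M^*$ by the product forcing theorem (applied to suitable reassociations of $\coll\times\coll\times\coll$), and $N^a\subseteq M^*[H_i\colon i\in a]$ because $M^a\subseteq M^*$. Second, $M^*\cap N^a=M^a$ for each fixed $a$: the model $M^*$ is a generic extension of $M^a$ --- this is the factoring of the Cohen forcing on the isosceles-triangle space along the open continuous projection to the two vertices indexed by $a$, the same factoring already used in the proof of Proposition~\ref{isoproposition} --- so write $M^*=M^a[h_a]$ for some $h_a$ generic over $M^a$ for a poset $\mathbb{Q}_a\in M^a$; since $(H_i\colon i\in a)$ is $(\coll\times\coll)$-generic over $M^*$ and $\coll\times\coll\in M^a$, the product lemma makes $(h_a,(H_i\colon i\in a))$ generic over $M^a$ for $\mathbb{Q}_a\times\coll\times\coll$, so $M^*$ and $N^a=M^a[H_i\colon i\in a]$ are mutually generic over $M^a$ and thus $M^*\cap N^a=M^a$. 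Combining the two steps, $\bigcap_a N^a=\bigcap_a(M^*\cap N^a)=\bigcap_a M^a=W_0$, the last equality by Proposition~\ref{isoproposition}(3) applied in $V[K]$.

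The one point that is not routine product-forcing bookkeeping is the assertion in the second step that $M^*$ is a generic extension of $M^a$; this is exactly the structural feature of the isosceles-triangle poset that Proposition~\ref{isoproposition} turns on (the projection of a generic isosceles triangle to a pair of its vertices is an open continuous map onto $\mathbb{R}^2\times\mathbb{R}^2$, so the Cohen forcing on the space of isosceles triangles factors through the Cohen forcing on $\mathbb{R}^2\times\mathbb{R}^2$), and it also follows from Grigorieff's theorem on intermediate models of a set-generic extension. Everything else is an application of the two product-forcing lemmas recalled at the start.
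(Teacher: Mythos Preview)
Your argument is correct. Both parts reduce, as they must, to Proposition~\ref{isoproposition} applied in $V[K]$, and your bookkeeping with the product lemmas is accurate: the pairwise genericity in the first paragraph is clean, and the two-step reduction in the second paragraph (first $\bigcap_a N^a\subseteq M^*$, then $M^*\cap N^a=M^a$) is valid.

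The paper organizes the second part differently. Rather than passing through $M^*$ as an intermediate target, it strips off the filters $H_i$ one at a time: knowing that $b$ lies in one of the three models that does not involve $H_0$, and that $H_0$ is generic over $V[K][x_0,x_1,x_2]$, one argues that $b$ already lies in the $H_0$-free parts of the other two models; repeating for $H_1$ and $H_2$ lands $b$ in $\bigcap_a M^a=V[K]$. Your route instead first confines $b$ to $M^*$ using the mutual genericity of the $H_i$ over $M^*$, and then uses the single extra structural fact that $M^*$ is a generic extension of each $M^a$ to conclude $M^*\cap N^a=M^a$. You are right to flag this last fact as the only non-routine point; it is exactly what the open-projection analysis in the proof of Proposition~\ref{isoproposition} provides (the fiber over a pair of vertices is a semi-algebraic curve definable from that pair, so the quotient forcing lives in $M^a$), and Grigorieff's theorem gives it as well. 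The paper's peeling argument is terser and leaves this point implicit; your decomposition makes the dependence explicit, at the cost of invoking one more named lemma. Both approaches are equivalent in strength and ultimately rest on the same product-forcing facts together with Proposition~\ref{isoproposition}(3).
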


\begin{proof}
The first sentence is immediate. For the second sentence, I will show that $V[K][x_0, x_1][H_0]\cap V[K][x_0, x_2][H_0]\cap V[K][x_1, x_2][H_0]=V[K]$; the claim is then obtained by repeating this argument three times.

Suppose that $b$ is a set of ordinals in the intersection of the three models. Since $H_0$ is generic over $V[K][x_0, x_1, x_2]$ and $b\in V[K][x_1, x_2]$ holds, a genericity argument with $H$ in the models $V[K][x_0, x_1]$ and $V[K][x_0, x_2]$ shows that $b$ must be in both of these models. It follows that $b\in \bigcap_{a\in [3]^2}V[K][x_i\colon i\in a]$ and therefore in $V[K]$ as desired.
\end{proof}

\noindent By the balance Proposition~\ref{equibalanceproposition}, the conditions $\gs/x_i, H_i$ for $i\in 3$ have a common lower bound in the poset $P$. That lower bound forced the isosceles triangle $\langle x_i\colon i\in 3\rangle$ to be a subset of $\tau$. Thus completes the proof of Theorem~\ref{maintheorem}.

\bibliographystyle{plain} 
\bibliography{odkazy,zapletal,shelah}
\end{document}